\title{Maximum Number of Almost Similar Triangles in the Plane}
\newtheorem{theo}{Theorem}
\newtheorem{lemma}[theo]{Lemma}
\newtheorem{corl}[theo]{Corollary}
\newtheorem{conj}[theo]{Conjecture}
\newtheorem{claim}[theo]{Claim}
\theoremstyle{definition}
\newtheorem{defn}[theo]{Definition}
\numberwithin{theo}{section}
\begin{document}

\author{%
  J\'ozsef Balogh \footnote{Department of Mathematics, University of Illinois at Urbana-Champaign, Urbana, Illinois 61801, USA, and Moscow Institute of Physics and Technology, Russian Federation. E-mail: \texttt{jobal@illinois.edu}. Research is partially supported by NSF Grant DMS-1764123, NSF RTG grant DMS 1937241, Arnold O. Beckman Research
Award (UIUC Campus Research Board RB 18132), the Langan Scholar Fund (UIUC), and the Simons Fellowship.}
\and Felix Christian Clemen \footnote {Department of Mathematics, University of Illinois at Urbana-Champaign, Urbana, Illinois 61801, USA, E-mail: \texttt{fclemen2@illinois.edu}.}
 \and Bernard Lidick\'{y} \footnote {Iowa State University, Department of Mathematics, Iowa State University, Ames, IA., E-mail: \newline \texttt{ lidicky@iastate.edu}. Research of this author is partially supported by NSF grant DMS-1855653.}
}
\date{\today}
\maketitle

\abstract{
A triangle $T'$ is $\varepsilon$-similar to another triangle $T$ if their angles pairwise differ by at most $\varepsilon$. 
Given a triangle $T$, $\varepsilon>0$ and $n\in\mathbb{N}$, B\'ar\'any and F\"uredi asked to determine the maximum number of triangles $h(n,T,\varepsilon)$ being $\varepsilon$-similar to $T$ in a planar point set of size $n$. We show that for almost all triangles $T$ there exists $\varepsilon=\varepsilon(T)>0$ such that $h(n,T,\varepsilon)=n^3/24 (1+o(1))$. Exploring connections to hypergraph Tur\'an problems, we use flag algebras and stability techniques for the proof.} 

\medskip
\noindent
\textbf{Keywords:} similar triangles, extremal hypergraphs, flag algebras.
 
\noindent
2020 Mathematics Subject Classification:  52C45, 05D05, 05C65 

\section{Introduction}
Let $T,T'$ be triangles with angles $\alpha\leq \beta \leq \gamma$ and $\alpha'\leq \beta'\leq \gamma'$ respectively. The triangle $T'$ is \emph{$\varepsilon$-similar} to $T$ if $|\alpha-\alpha'|< \varepsilon, |\beta-\beta'|<\varepsilon,$ and $|\gamma-\gamma'|<\varepsilon$. B\'ar\'any and F\"uredi~\cite{MR3953886}, motivated by Conway, Croft, Erd\H{o}s and Guy~\cite{MR527745}, studied the maximum number $h(n,T,\varepsilon)$ of triangles in a planar set of $n$ points that are $\varepsilon$-similar to a triangle $T$. For every $T$ and $\varepsilon=\varepsilon(T)>0$ sufficiently small, B\'ar\'any and F\"uredi~\cite{MR3953886} found the following lower bound construction: Place the $n$ points in three groups with as equal sizes as possible, and each group very close to the vertices of the triangle $T$. Now, iterate this by splitting each of the three groups into three further subgroups of points, see Figure~\ref{fig:iterated construction} 
for an illustration of this construction. Define a sequence $h(n)$ by $h(0)=h(1)=h(2)=0$ and for $n\geq 3$
\begin{align*}
    h(n):=\max\{abc+h(a)+h(b)+h(c): a+b+c=n,\ a,b,c\in\mathbb{N} \}.
\end{align*}

\begin{figure}
\begin{center}
    \begin{tikzpicture}[scale=1.5]
    \draw[fill=black] 
    \foreach \a in {-30,80,210}{
        \foreach \b in {-30,80,210}{
        \foreach \c in {-30,80,210}{
        (\a:1) ++(\b:0.3) ++(\c:0.07) circle(1pt)
        }
        }
    }
    ;
    \end{tikzpicture}
\end{center}
    \caption{Construction sketch on 27 vertices.}
    \label{fig:iterated construction}
\end{figure}
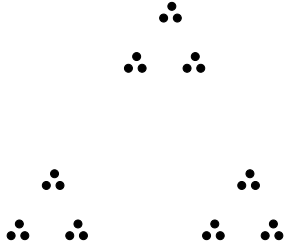

By the previously described construction, this sequence $h(n)$ is a lower bound on $h(n,T,\varepsilon)$. 
For $T$ being an equilateral triangle equality holds.
\begin{theo}[B\'ar\'any, F\"uredi\cite{MR3953886}]
Let $T$ be an equilateral triangle. There exists $\varepsilon_0\geq 1^{\circ}$ such that for all $\varepsilon \in (0,\varepsilon_0)$ and all $n$ we have $h(n,T,\varepsilon)=h(n)$.
In particular, when $n$ is a power of $3$, $h(n,T,\varepsilon)= \frac{1}{24}(n^3-n)$.
\end{theo}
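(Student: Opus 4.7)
The inequality $h(n,T,\varepsilon)\ge h(n)$ is the content of the iterated three-cluster construction sketched just above the theorem: fix $a+b+c=n$, place $a$, $b$, $c$ points in small clusters around the three vertices of the equilateral $T$, and recurse inside each cluster. Provided the cluster radii are small compared to $\varepsilon$ and the side length of $T$, any triple with one vertex per cluster is $\varepsilon$-equilateral, contributing $abc$ such triangles, while the recursion contributes $h(a)+h(b)+h(c)$ more. Maximizing over $(a,b,c)$ yields $h(n)$.

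\textbf{Upper bound strategy.} I would prove $h(n,T,\varepsilon)\le h(n)$ by induction on $n$, the cases $n\le 2$ being trivial. For the inductive step, the target is to exhibit a partition $P=P_1\sqcup P_2\sqcup P_3$ of the $n$-point set $P$ such that every $\varepsilon$-equilateral triangle in $P$ is either \emph{monochromatic} (all three vertices in one $P_i$) or \emph{rainbow} (one vertex in each $P_i$). Once such a partition is secured, the number of $\varepsilon$-equilateral triangles in $P$ is at most $|P_1|\,|P_2|\,|P_3|+h(|P_1|)+h(|P_2|)+h(|P_3|)\le h(n)$, by the recursive definition of $h$ and the induction hypothesis.

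\textbf{Geometric core.} The crux is producing the partition. The decisive rigidity observation is as follows: if $\triangle ABC$ and $\triangle ABD$ are both $\varepsilon$-similar to $T$ for $\varepsilon<1^\circ$, then the two admissible positions for the third vertex (the two equilateral completions of the segment $AB$) force $C$ and $D$ either to lie in a common tiny region (whence $|CD|\ll |AB|$) or to lie on opposite sides of $AB$ at mutual distance close to $\sqrt{3}\,|AB|$; in either case neither $\triangle ACD$ nor $\triangle BCD$ can be $\varepsilon$-similar to $T$. Consequently, any four points of $P$ span at most two $\varepsilon$-equilateral triangles, and those two must share an edge. The plan is to leverage this by selecting a maximum-diameter $\varepsilon$-equilateral triangle $T_0=\{v_1,v_2,v_3\}$ in $P$ and partitioning $P$ into three classes $P_1,P_2,P_3$ according to which vertex of $T_0$ each point is closest to (e.g.\ by Voronoi cells). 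The maximality of $T_0$ together with the rigidity lemma should force every $\varepsilon$-equilateral triangle of $P$ either to be essentially a copy of $T_0$ (hence rainbow) or to have all three vertices close together on the scale of $T_0$ (hence confined to a single cell). I expect the main technical obstacle to be making this partition precise, i.e., verifying that under $\varepsilon<1^\circ$ no $\varepsilon$-equilateral triangle of $P$ can straddle exactly two cells; this likely requires refining the Voronoi assignment near the cell boundaries using the angular ``role'' each point plays in the triangles containing it.

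\textbf{Closed form.} The particular value $(n^3-n)/24$ when $n$ is a power of $3$ follows by solving the recursion at its balanced optimum $a=b=c=n/3$, which gives $h(3m)=27m^3+3h(m)$; starting from $h(1)=0$ and inducting on the power of $3$ yields $h(n)=(n^3-n)/24$.
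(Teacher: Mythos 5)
First, a point of orientation: this theorem is quoted from B\'ar\'any and F\"uredi \cite{MR3953886} and is not proved in the present paper, so there is no internal proof to compare against; your attempt has to stand on its own. Your lower bound and the closed form are fine in substance (though note the balanced recursion is $h(3m)=m^3+3h(m)$, not $27m^3+3h(m)$; with $a=b=c=m$ the rainbow term is $abc=m^3$, and one should also check, e.g.\ by induction with AM--GM, that no unbalanced split beats $(n^3-n)/24$ when $n$ is a power of $3$).

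The genuine gap is in the geometric core, which is exactly the hard content of the theorem, and the specific mechanism you propose does not deliver it. Your plan rests on the claim that, after fixing a maximum-diameter $\varepsilon$-equilateral triangle $T_0=\{v_1,v_2,v_3\}$ and partitioning $P$ by Voronoi cells of its vertices, every $\varepsilon$-equilateral triangle is either rainbow or confined to one cell. That claim is false as stated: take $v_1=(0,0)$, $v_2=(1,0)$, $v_3=(1/2,\sqrt{3}/2)$, and add the exactly equilateral triangle $X=(0.45,-0.1)$, $Y=(0.45,0.3)$, $Z\approx(0.796,0.1)$ of side $0.4$; then $X,Y$ lie in the cell of $v_1$ while $Z$ lies in the cell of $v_2$, and no maximality of $T_0$ is violated since $|XY|<1$. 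So a medium-scale triangle can straddle exactly two cells, and the induction step collapses. Your four-point rigidity observation (no $K_4^-$, i.e.\ at most two $\varepsilon$-equilateral triangles on four points) is correct but far too weak on its own: the pure Tur\'an problem for $K_4^-$ has conjectured density $2/7>1/4$, so additional geometric input is unavoidable, and producing a partition in which every $\varepsilon$-equilateral triangle is monochromatic or rainbow (or otherwise controlling the cross triangles) for an arbitrary $n$-point set is precisely what the B\'ar\'any--F\"uredi argument has to accomplish; your proposal acknowledges this obstacle but does not overcome it, and the route sketched would need a substantially different choice of partition and a proof that two-cell triangles cannot occur (or can be amortized), none of which is supplied.
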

B\'ar\'any and F\"uredi~\cite{MR3953886} also found various examples of triangles $T$ (e.g.~the isosceles right angled triangle) where $h(n,T,\varepsilon)$ is larger than $h(n)$. 

The space of triangle shapes $S\subseteq \mathbb{R}^3$ can be represented
with triples $(\alpha, \beta, \gamma)\in \mathbb{R}^3$ of angles $\alpha,\beta,\gamma>0$ with $\alpha+\beta+\gamma=\pi$. When we make statements about almost every triangle, we mean it in a measure theoretic sense, i.e.~that there exists a set $S'\subseteq S$ with the $2$-dimensional Lebesque measure being $0$ such that the statements holds for all triangles $T\in S\setminus S'$. In \cite{MR3953886} it also was proved that $h(n,T,\varepsilon)$ can only be slightly larger than $h(n)$ for almost every triangle $T$.
\begin{theo}[B\'ar\'any, F\"uredi~\cite{MR3953886}]
\label{pointinplaneasymptotic}
For almost every triangle $T$ there is an $\varepsilon>0$ such that
\begin{align*}
    h(n,T,\varepsilon)\leq 0.25072 \binom{n}{3}(1+o(1)).
\end{align*}
\end{theo}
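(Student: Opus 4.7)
The plan is to translate the geometric problem into a labeled 3-uniform hypergraph density problem, then use the genericity of $T$ to rule out algebraic coincidences that would otherwise allow dense substructures. Let $H$ be the 3-uniform hypergraph whose vertices are the $n$ points and whose edges are the triples forming an $\varepsilon$-similar copy of $T$. Each edge of $H$ carries a natural labeling of its three vertices by the angles $\alpha \le \beta \le \gamma$ of $T$, well-defined up to $\varepsilon$. The goal is to bound $|E(H)|$ by $0.25072\binom{n}{3}(1+o(1))$, just slightly above the iterated-tripartite lower bound $\tfrac{1}{4}\binom{n}{3}(1+o(1))$.

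The first major step is to pin down which triangles $T$ have to be excluded. If several $\varepsilon$-similar copies of $T$ coexist on a small vertex set (for instance, four $\varepsilon$-similar triangles on four points, or richer multi-edge patterns), then the side-length and angle constraints of similarity become a system of polynomial equations in $\alpha,\beta,\gamma$. For each such pattern, the triangles satisfying the equations form a finite union of algebraic curves in the two-dimensional shape space $\{(\alpha,\beta,\gamma):\alpha+\beta+\gamma=\pi\}$, hence a Lebesgue-null set. Discarding the countable union of all such curves leaves a full-measure set of good $T$ for which these small patterns are forbidden in $H$ once $\varepsilon$ is sufficiently small.

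The second major step is to convert the excluded patterns into the density bound. For a fixed angular labeling, given a pair of points $p,q$, the third vertex of an $\varepsilon$-similar triangle must lie in a thin angular wedge based at $p$ of opening $O(\varepsilon)$, and in the symmetric wedge based at $q$. A first-moment count over labeled pairs, combined with the forbidden multi-triangle patterns that force an essentially tripartite local structure on $H$, should yield $|E(H)| \le 0.25072 \binom{n}{3}(1+o(1))$. The small excess of $0.00072$ over the conjecturally optimal $\tfrac{1}{4}$ absorbs both the slack from $\varepsilon$-tolerance and the slack in the coarse density estimate.

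The main obstacle is bridging the gap between a purely Turán-type bound coming from a forbidden subhypergraph and the comparatively sharp constant $0.25072$: merely forbidding $K_4^{(3)}$ in a 3-uniform hypergraph gives density around $5/9$, nowhere near $\tfrac{1}{4}$. One therefore has to extract substantial additional structure from the planar geometry, either via a stability-type argument showing that near-extremal configurations must resemble the iterated tripartite construction of Figure~\ref{fig:iterated construction}, or via a direct angular count exploiting the fact that the points contributing to many similar triangles cannot be distributed arbitrarily around a common vertex. Making this quantitative — and in particular isolating the specific forbidden patterns whose exclusion is both valid for generic $T$ and strong enough to push the density down below $0.25072$ — is where the bulk of the work lies.
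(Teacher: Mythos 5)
Your first step matches what actually happens: for a fixed small pattern of coincident $\varepsilon$-similar copies of $T$, the existence of a realization imposes polynomial constraints on the shape of $T$, the bad shapes form a null set in the two-dimensional shape space, and after discarding a countable (in fact finite) union of such null sets one gets, for almost every $T$ and $\varepsilon$ small, that $G(P,T,\varepsilon)$ omits a specific finite list of forbidden $3$-graphs --- this is exactly the mechanism behind Lemma~\ref{pointsinplaneforbiden1}, and it reduces the problem to bounding $\textup{ex}(n,\{K_4^-,C_5^-,C_5^+,L_2,L_3,L_4,L_5,L_6,P_7^-\})$.

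The gap is in your second step, and it is the whole theorem. The constant $0.25072$ is not obtained by any geometric first-moment or angular-wedge count, nor by an unspecified stability argument: once the reduction to the Tur\'an problem is made, the geometry is discarded entirely, and the bound \eqref{pointsinplaneturanupper} is the numerical output of a flag-algebra semidefinite-programming computation (Razborov's method, run on $7$-vertex configurations) applied to that specific forbidden family. Your wedge heuristic cannot work as stated: the third vertex of an $\varepsilon$-similar copy is indeed confined to thin wedges, but the $n$ points may cluster arbitrarily, so a pair can lie in $\Theta(n)$ edges and the wedge count gives nothing better than a trivial bound; and your proposed ``essentially tripartite local structure'' is precisely what is \emph{not} available at this stage --- extracting it is the new contribution of the present paper (Lemmas~\ref{flaginductive}--\ref{Fextrpartition}), is itself flag-algebra assisted, and yields the sharper $\frac14$, not the $0.25072$ of the cited theorem. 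You also do not identify which patterns beyond $K_4^-$ must be forbidden (e.g.\ $C_5^-$, $C_5^+$, $L_2$--$L_6$, $P_7^-$) and why their exclusion is valid for generic $T$; without the explicit list and without the SDP certificate there is no route in your outline to the stated constant, as you yourself concede in the final paragraph.
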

The previously described construction gives a lower bound of $0.25\binom{n}{3}(1+o(1))$. B\'ar\'any and F\"uredi~\cite{MR3953886} reduced the problem of determining $h(n,t,\varepsilon)$ to a hypergraph Tur\'an problem and used the method of flag algebras, to get an upper bound on the corresponding Tur\'an problem. Flag algebras is a powerful tool invented by Razborov~\cite{flagsRaz}, which has been used to solve problems in various different areas, including graph theory~\cite{FAgraphs,FAgraphs2}, permutations~\cite{PAPerm1,PAPerm2} and discrete geometry~\cite{FAGeometry,FAGeom2}.
An obstacle B\'ar\'any and F\"uredi~\cite{MR3953886} encountered is that the conjectured extremal example is an iterative construction and flag algebras tend to struggle with those. We will overcome this issue by using flag algebras only to prove a weak stability result and then use cleaning techniques to identify the recursive structure. Similar ideas have been used in \cite{MR3425964} and \cite{MR3667664}. This allows us to prove the asymptotic result and for large enough $n$ an exact recursion. 
\begin{theo}
\label{pointsinplanemainasymp}
For almost every triangle $T$ there is an $\varepsilon=\varepsilon(T)>0$ such that
\begin{align}
    h(n,T,\varepsilon)= \frac{1}{4} \binom{n}{3}(1+o(1)).
\end{align}
\end{theo}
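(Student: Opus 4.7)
The lower bound $\tfrac{1}{4}\binom{n}{3}(1+o(1))$ is already furnished by the B\'ar\'any--F\"uredi iterated construction that defines the sequence $h(n)$, since an easy recursion shows $h(n)=\tfrac{n^3}{24}(1+o(1))$. The task is therefore the matching upper bound for almost every triangle $T$. I would attack it by reducing to a hypergraph Tur\'an problem, extracting \emph{one} level of stability via flag algebras, cleaning geometrically, and then closing the argument with a recursion. This is the same high-level recipe hinted at in the introduction and used in \cite{MR3425964,MR3667664}.

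\textbf{Reduction and forbidden configurations.} To a planar point set $P$ of size $n$ I would associate the $3$-uniform hypergraph $H_P$ whose hyperedges are exactly the triples forming an $\varepsilon$-similar copy of $T$. For a generic triangle $T$ (angles satisfying no small rational relation with $\pi$), only finitely many small sub-hypergraphs can be realized by $H_P$: certain configurations would force rational relations among the angles of $T$ and hence can be excluded for $T$ outside a measure-zero set $S'\subseteq S$. Let $\mathcal{F}=\mathcal{F}(T)$ be such a finite forbidden family, so that $h(n,T,\varepsilon)\le \mathrm{ex}(n,\mathcal{F})$.

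\textbf{Flag algebras giving weak stability.} I would run the flag algebra method on $\mathcal{F}$ to certify $\mathrm{ex}(n,\mathcal{F})\le \tfrac{1}{4}\binom{n}{3}+o(n^3)$ together with the following weak stability: every $\mathcal{F}$-free $3$-graph on $n$ vertices with $(1/4-o(1))\binom{n}{3}$ edges admits a partition $V=V_1\cup V_2\cup V_3$ with $|V_i|=n/3+o(n)$ and only $o(n^3)$ edges having two vertices in the same part. This is the minimum structural information one actually needs, and is tailored precisely to the known weakness of flag algebras on iterative constructions: I would not ask flag algebras to see the recursion, only to see the outermost tripartition. Transferring back through the geometric dictionary, the three parts correspond to point clusters concentrated near the three vertices of a triangle similar to $T$ (after discarding an $o(n)$ set of bad points with $o(n^3)$ loss in the count).

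\textbf{Recursion and main obstacle.} The edges of $H_P$ with one vertex in each $V_i$ contribute at most $|V_1||V_2||V_3|\le (n/3)^3$ triangles, while each part $V_i$ still hosts only $\varepsilon$-similar copies of $T$, giving
\begin{equation*}
h(n,T,\varepsilon)\;\le\;\frac{n^3}{27}+3\,h(n/3,T,\varepsilon)+o(n^3),
\end{equation*}
which iterates to $h(n,T,\varepsilon)\le \tfrac{1}{24}n^3(1+o(1))=\tfrac{1}{4}\binom{n}{3}(1+o(1))$, matching the lower bound. The main obstacle, as the authors themselves flag, is the flag algebra step: calibrating $\mathcal{F}$ so that the semidefinite certificate simultaneously yields the correct density $1/4$ \emph{and} a stability statement strong enough for the geometric cleaning, without demanding information about the recursive interior that flag algebras cannot see. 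A secondary technical point is verifying that the finitely many angle-relations defining $S'$ genuinely form a measure-zero set in the triangle-shape space, so that the argument applies to almost every $T$.
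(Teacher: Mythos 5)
Your overall architecture (reduce to a Tur\'an problem for a finite forbidden family, use flag algebras for stability, clean, recurse) matches the paper's, but the load-bearing step is assumed rather than supplied, and it is exactly the step the paper says cannot be done in the way you propose. You ask flag algebras to certify $\mathrm{ex}(n,\mathcal{F})\le \tfrac14\binom{n}{3}+o(n^3)$ together with a stability statement with parts of size $n/3+o(n)$. The authors report that a direct flag algebra computation (even on $8$ vertices, with the enlarged family) only yields $0.2502\binom{n}{3}$, precisely because the conjectured extremal example is an iterated blow-up, which flag algebras cannot see; so neither the density $1/4$ nor balanced-part stability at that density is available as an SDP certificate. (Note also the internal redundancy in your plan: if flag algebras did certify the $1/4$ density, the asymptotic theorem would follow immediately from $h(n,T,\varepsilon)\le\mathrm{ex}(n,\mathcal{F})$ and your subsequent stability-plus-recursion step would be unnecessary.) The paper's way around this is genuinely different: flag algebras are used only to prove a structural lemma valid for any $\mathcal{F}$-free $G$ with at least $\tfrac1{24}n^3(1+o(1))$ edges, namely the existence of a single edge $x_1x_2x_3$ whose three pairwise co-neighborhoods are disjoint, each of size at least $0.26n$, and together cover all but $0.012n$ vertices; this is extracted by optimizing a quadratic functional over copies of a $T_{2,2,1}$ base, not by bounding the edge density. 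The density $1/4$ is then an \emph{output} of an induction in which each part (of size only known to lie in $[0.26n,0.48n]$, not $n/3+o(n)$) is again $\mathcal{F}$-free, using that $x_1x_2x_3+\tfrac1{24}(x_1^3+x_2^3+x_3^3)$ is maximized at the balanced point.

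Two further ingredients that your sketch omits but that the cleaning step cannot do without: first, the partition must be made \emph{exactly} free of edges with two vertices in one part and the third in another, and this requires enlarging the forbidden family with new configurations ($L_7$--$L_{10}$, verified forbidden by a computer search over embeddings), not just the B\'ar\'any--F\"uredi list; second, the cleaning lemmas need a minimum link condition $|L(w)|\ge\tfrac18 n^2(1+o(1))$, which is obtained for extremal $\mathcal{F}$-free graphs (and for extremal point sets) by a vertex-deletion-and-duplication argument showing the modified hypergraph stays $\mathcal{F}$-free. Without a substitute for the structural lemma and these two ingredients, your plan does not close: the ``calibrate $\mathcal{F}$ so the SDP sees both $1/4$ and stability'' step you flag as the main obstacle is not merely technical, it is the part of the argument that has to be replaced by a different mechanism, which is what the paper does.
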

\begin{theo}\label{pointinplanemainrekursion}
There exists $n_0$ such that for all $n\geq n_0$ and for almost every triangle $T$ there is an $\varepsilon=\varepsilon(T)>0$ such that
\begin{align}
\label{pointsinplanerecformula}
    h(n,T,\varepsilon)= a\cdot b \cdot c+h(a,T,\varepsilon)+h(b,T,\varepsilon)+h(c,T,\varepsilon),
\end{align}
where $n=a+b+c$ and $a,b,c$ are as equal as possible.
\end{theo}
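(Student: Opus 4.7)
The proof strategy is induction on $n$, combining the asymptotic result of Theorem~\ref{pointsinplanemainasymp} with a stability upgrade of its flag algebra ingredient and a cleaning argument, as outlined in the introduction.

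The first step is to strengthen the flag algebra input of Theorem~\ref{pointsinplanemainasymp} into a stability statement: for almost every triangle $T$ there are constants $\varepsilon, \delta, \eta > 0$ depending on $T$ such that, for all sufficiently large $n$, any planar set $P$ of size $n$ with at least $(1-\delta)\cdot \frac{1}{4}\binom{n}{3}$ $\varepsilon$-similar copies of $T$ admits a partition $P = A \cup B \cup C$ with $\bigl||X| - n/3\bigr| \le \eta n$ for each $X \in \{A,B,C\}$, such that all but $\eta n^3$ of the similar triangles are transversal to this partition, and each part lies in a small neighborhood of a distinct vertex of a scaled-and-placed copy of $T$. A stability theorem of this type for the underlying $3$-uniform hypergraph Tur\'an problem should be extractable from the flag algebra certificate by inspecting the slack in the sum-of-squares bound; the genericity of $T$ is needed to rule out alternative extremal configurations arising from symmetries such as those exploited by the isosceles right triangle.

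Next, I would apply a cleaning step that converts this approximate structure into an exact one. Fix an extremal $P$. Any point $p \in P$ lying outside all three clusters belongs to strictly fewer than $(1-\delta') n^2/8$ similar triangles, which is less than what a typical point in the majority cluster contributes. Removing such atypical points and inserting replacements into the majority cluster then strictly increases the count, contradicting extremality, provided $h(n,T,\varepsilon) \ge h(n-1,T,\varepsilon) + (n-1)^2/8 (1+o(1))$, which follows from the iterated construction. Hence for $n$ large enough, $P = A \cup B \cup C$ with each part fitting entirely inside its vertex-neighborhood; the angular tolerances then guarantee that every transversal triangle (one vertex per part) is automatically $\varepsilon$-similar to $T$.

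With this clean tripartition in place the count splits as
\begin{align*}
h(n,T,\varepsilon) = |A|\cdot |B|\cdot |C| + h(|A|,T,\varepsilon) + h(|B|,T,\varepsilon) + h(|C|,T,\varepsilon),
\end{align*}
and induction applied to each part, whose size lies in $[n/3 - \eta n, n/3 + \eta n]$ and so exceeds $n_0$ for $n$ large, yields the claimed recursion. A short concavity argument shows that $abc + h(a) + h(b) + h(c)$ subject to $a + b + c = n$ is maximized precisely when $a, b, c$ are as equal as possible, pinning down the extremal split. The main obstacle will be the first step: flag algebras yield a combinatorial tripartition directly, but translating this into a \emph{geometric} one, with the three parts sitting near specified vertex-neighborhoods of $T$, requires careful use of the almost-every hypothesis on $T$ to rule out degenerate extremizers. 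A secondary subtlety is ensuring that $\varepsilon = \varepsilon(T)$ does not shrink as the recursion iterates, but this is automatic since $\varepsilon$ is fixed for $T$ at the outset and each sub-problem inherits the same $\varepsilon$.
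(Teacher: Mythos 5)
Your overall architecture (stability plus cleaning plus induction) matches the paper's announced strategy, but the two steps carrying all the weight are not established and, as formulated, would not go through. First, you ask the flag algebra certificate to deliver a \emph{geometric} stability statement: three clusters each sitting near a distinct vertex of a scaled-and-placed copy of $T$. Flag algebras only see the abstract hypergraph $G(P,T,\varepsilon)$, which has forgotten all point positions, so no amount of inspecting slack in the SDP can locate the parts geometrically; the paper neither proves nor needs such a statement. Its actual route is combinatorial: a weak partition coming from a carefully chosen copy of $T_{2,2,1}$ (Lemma~\ref{flaginductive}), upgraded to an exact partition in which \emph{no} edge has exactly two vertices in one part and one in another, using the newly forbidden hypergraphs $L_7,\ldots,L_{10}$ of Lemma~\ref{pointsinplaneforbiden2} together with a minimum-degree condition obtained from extremality by cloning points (Lemmas~\ref{pointsinplanepartition}, \ref{Fextrpartition}, \ref{pointsinplaneaddvertex}, \ref{pointsplanemindegree2}). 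Your cleaning only discards points lying outside the clusters; it never eliminates edges with two vertices in one part and the third in another, and without that the upper-bound direction of $h(n)=abc+h(a)+h(b)+h(c)$ fails, since such edges are counted by neither $abc$ nor the $h(|X_i|)$ terms. Likewise, completeness of the transversal triples is obtained in the paper not from ``angular tolerances'' (which would presuppose the unproved geometric claim) but from extremality: if some transversal triple were missing, re-placing the three parts near the corners of a large copy of $T$ would strictly increase the count (Lemma~\ref{pointsinplanepartition3}(ii)).

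Second, the ``short concavity argument'' for $abc+h(a)+h(b)+h(c)$ is a genuine gap. All that is known a priori is $h(m)=\frac{1}{24}m^3(1+o(1))$, and with only this the relevant comparison $h(a)-h(a-1)$ versus $h(b+1)-h(b)$ carries an $o(n^2)$ uncertainty, which swamps the $\Theta(n)$ gain in the product term when $a-b$ is a bounded constant; so balancedness cannot be read off from the asymptotics. The paper instead proves directly that the extremal configuration's partition is balanced to within one, by a swap inside the configuration: delete a minimum-link vertex of the largest part and clone a maximum-link vertex of a smaller part (Lemmas~\ref{pointsinplanepointswap} and \ref{pointsinplanepartition3}(iii)), using $e(G[X_i])=\bigl(\tfrac14+o(1)\bigr)\binom{|X_i|}{3}$ together with the quantitative bounds behind Theorem~\ref{pointinplaneturanmainrekursion}; any rigorous version of your concavity claim would need essentially these cloning/deletion increment estimates, so it is not short and is not supplied. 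Your final remark that $\varepsilon(T)$ does not shrink along the recursion is correct and matches the paper, since $\varepsilon(T)$ is fixed once $T\in\mathcal{T}_{\mathcal{F}}$.
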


We will observe that Theorem~\ref{pointinplanemainrekursion} implies the exact result when $n$ is a power of $3$.
\begin{corl}
\label{pointsinplanecorol}
Let $n$ be a power of $3$. Then, for almost every triangle $T$ there is an $\varepsilon=\varepsilon(T)>0$ such that
\begin{align*}
    h(n,T,\varepsilon)= \frac{1}{24}(n^3-n).
\end{align*}
\end{corl}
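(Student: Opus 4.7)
The argument is an induction on $k$ where $n=3^k$, combining the Bárány–Füredi iterated construction as a lower bound with Theorem~\ref{pointinplanemainrekursion} as the driver of the upper bound. For the lower bound, the iterated three-cluster construction illustrated in Figure~\ref{fig:iterated construction} realizes at least $h(n)$ triangles $\varepsilon$-similar to $T$ once $\varepsilon$ is chosen small enough relative to $T$. Writing $a(k):=h(3^k)$, this construction yields $a(k)=3^{3k-3}+3\,a(k-1)$ with $a(1)=1$; solving the linear recurrence gives $a(k)=\tfrac{1}{24}(3^{3k}-3^k)=\tfrac{1}{24}(n^3-n)$, so $h(n,T,\varepsilon)\geq\tfrac{1}{24}(n^3-n)$.

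For the upper bound, let $n_0$ be the constant supplied by Theorem~\ref{pointinplanemainrekursion}. For every $n=3^k\geq n_0$ the ``as equal as possible'' split is $a=b=c=3^{k-1}$, so the theorem gives
\begin{align*}
    h(3^k,T,\varepsilon) \;=\; 3^{3k-3} + 3\,h(3^{k-1},T,\varepsilon).
\end{align*}
Assuming inductively that $h(3^{k-1},T,\varepsilon)=\tfrac{1}{24}(3^{3(k-1)}-3^{k-1})$, a direct computation collapses the right-hand side to
\begin{align*}
    3^{3k-3} + \tfrac{1}{8}(3^{3k-3}-3^{k-1}) \;=\; \tfrac{9\cdot 3^{3k-3}-3^{k-1}}{8} \;=\; \tfrac{1}{24}(3^{3k}-3^k),
\end{align*}
which advances the induction and matches the claimed value.

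The main obstacle is the base of the induction, since Theorem~\ref{pointinplanemainrekursion} is only guaranteed for $n\geq n_0$: the finitely many powers $3^k<n_0$ must be handled separately, and one needs the base-case value $h(3^{k_0-1},T,\varepsilon)$ (where $k_0=\lceil\log_3 n_0\rceil$) to agree exactly with the formula in order to seed the recursion at $k=k_0$. I would resolve this by a direct planar argument of Bárány–Füredi type~\cite{MR3953886}: for each of the finitely many small values $n<n_0$, taking $\varepsilon$ sufficiently small (depending on $T$ and this fixed $n$) forces any maximum configuration on $n$ points to essentially coincide with the iterated three-cluster construction, so that $h(n,T,\varepsilon)=h(n)$ holds. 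Since only finitely many such $\varepsilon$-thresholds occur, they can all be met simultaneously by a single $\varepsilon=\varepsilon(T)>0$ that also satisfies the requirements of Theorem~\ref{pointinplanemainrekursion}, closing the induction for every $k$ and establishing the corollary.
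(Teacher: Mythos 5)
Your lower bound and the inductive step for $3^k\geq n_0$ are fine, but the base case is a genuine gap, and it is precisely the point the paper's proof is designed to avoid. Your induction must be seeded with the exact value $h(3^{k_0-1},T,\varepsilon)=\tfrac{1}{24}\bigl(3^{3(k_0-1)}-3^{k_0-1}\bigr)$ at a size below $n_0$, where Theorem~\ref{pointinplanemainrekursion} and the entire flag-algebra/stability machinery (valid only for $n$ sufficiently large) give nothing. Your proposed fix --- that for each fixed $n<n_0$ a sufficiently small $\varepsilon$ forces every maximum configuration to coincide with the iterated three-cluster construction, so $h(n,T,\varepsilon)=h(n)$ --- is itself an unproved exact result: B\'ar\'any and F\"uredi establish such a statement only for the equilateral triangle, they exhibit triangles (e.g.\ the isosceles right triangle) for which $h(n,T,\varepsilon)>h(n)$, and no argument in the literature or in this paper yields it for almost every triangle at a fixed finite $n$. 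Since $n_0$ comes from ``sufficiently large'' asymptotics, $3^{k_0-1}$ is not a size one can handle by direct inspection, so the induction cannot be started.

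The paper's proof avoids any small-case analysis by a product (blow-up) argument running in the opposite direction. Iterating \eqref{pointsinplanerecformula} above $n_0$ gives, for a maximizing $3^\ell$-point set $P$ with $3^\ell\geq n_0$ and $H=G(P,T,\varepsilon(T))$, the identity $h(3^{\ell+i},T,\varepsilon(T))=3^i e(H)+3^{3\ell}\tfrac{1}{24}(3^{3i}-3^i)$, in which the exact value of $e(H)$ is never needed. If some $3^k$-point set $P'$ (with $3^k$ possibly far below $n_0$) had more than $\tfrac{1}{24}(3^{3k}-3^k)$ triangles $\varepsilon(T)$-similar to $T$, replacing each of its points by a tiny scaled copy of $P$ would give $3^{\ell+k}$ points with $e(G)\cdot 3^{3\ell}+e(H)\cdot 3^k$ such triangles, exceeding the identity at $i=k$ because $e(H)$ appears identically on both sides --- a contradiction. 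Together with the $S(n)$ lower bound this proves the corollary for every power of $3$. If you wish to retain your upward induction, you would have to replace your base-case claim by an argument of this kind; as written, the proposal assumes a statement at least as hard as the corollary itself.
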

The paper is organized as follows. In Section~\ref{pointsinplanepreperation} we introduce terminology and notation that we use, we establish a connection from maximizing the number of similar triangles to Tur\'an problems; and we apply flag algebras in our setting to derive a weak stability result. In Section~\ref{pointsinplanemainsec} we apply cleaning techniques to improve the stability result and derive our main results. Finally, in Section~\ref{pointsinplaneconcludingremarks} we discuss further questions. 

\section{Preparation}
\label{pointsinplanepreperation}
\subsection{Terminology and Notation}
\begin{defn}
Let $G$ be a $3$-uniform hypergraph (shortly a $3$-graph), $\mathcal{H}$ be a family of $3$-graphs, $v\in V(G)$ and $A,B\subseteq V(G)$. Then,
\begin{itemize}[leftmargin=*]
\setlength\itemsep{0em}
\item $G$ is \emph{$\mathcal{H}$-free}, if it does not contain a copy of any $H\in \mathcal{H}$.
\item a $3$-graph $G$ on $n$ vertices is \emph{extremal} with respect to $\mathcal{H}$, if $G$ is $\mathcal{H}$-free and $e(G')\leq e(G)$ for every $\mathcal{H}$-free 3-graph $G'$ on $n$ vertices. If it is clear from context, we only say $G$ is extremal.
\item for $a,b\in V(G)$, denote $N(a,b)$ the \emph{neighborhood} of $a$ and $b$, i.e.~the set of vertices $c\in V(G)$ such that $abc\in E(G)$.
\item we write $L(v)$ for the \emph{linkgraph} of $v$, that is the graph $G'$ with $V(G')=V(G)\setminus\{v\}$ and $E(G')$ being the set of all pairs ${a,b}$ with $abv\in E(G)$.
\item we write $L_A(v)$ for the linkgraph of $v$ on $A$, that is the graph $G'$ with $V(G')=A\setminus\{v\}$ and $E(G')$ being the set of all pairs ${a,b}\subseteq A\setminus \{v\}$ with $abv\in E(G)$.
\item we write $L_{A,B}(v)$ for the (bipartite) linkgraph of $v$ on $A\cup B$, that is the graph $G'$ with $V(G')=A\cup B\setminus\{v\}$ and $E(G')$ being the set of all pairs ${a,b}$ with $a\in A, b\in B$ and $abv\in E(G)$.
\item we denote by $|L(v)|,|L_A(v)|$ and $|L_{A,B}(v)|$ the number of edges of the linkgraphs $L(v),L_A(v)$ and $L_{A,B}(v)$ respectively. 
\end{itemize}
\end{defn}
Define a $3$-graph $S(n)$ on $n$ vertices recursively. For $n=1,2$, let $S(n)$ be the $3$-graph on $n$ vertices with no edges. For $n\geq 3$, choose $a\geq b \geq c$ as equal as possible such that $n=a+b+c$. Then, define $S(n)$ to be the $3$-graph constructed by taking vertex disjoint copies of $S(a), S(b)$ and $S(c)$ and adding all edges with all $3$ vertices coming from a different copy. B\'ar\'any and F\"uredi~\cite{MR3953886} observed that $|S(n)|\geq \frac{1}{24}n^3-O(n \log n)$.

Given a set $B\subseteq \mathbb{C}$ and $\delta>0$, we call the set $U_\delta(B):=\{z: |z-b|<\delta \text{ for some } b\in B\}$ the $\delta$-\emph{neighborhood} of $B$. If $B=\{b\}$ for some $b\in \mathbb{C}$, abusing notation, we write $U_\delta(b)$ for it. 

\subsection{Forbidden subgraphs}
\label{pointsinplaneforbidden}

Given a finite point set $P\subseteq \mathbb{R}^2$ in the plane, a triangle $T\in S$ and an $\varepsilon>0$, we denote $G(P,T,\varepsilon)$ the $3$-graph with vertex set $V(G(P,T,\varepsilon))=P$ and triples $abc$ being an edge in $G(P,T,\varepsilon)$ iff $abc$ forms a triangle $\varepsilon$-similar to $T$.
A $3$-graph $H$ is called \emph{forbidden} if $|V(H)|\leq 12$ and for almost every triangle shape $T\in S$ there exists an $\varepsilon=\varepsilon(T)>0$ such that for every point set $P\subseteq \mathbb{R}^2$, $G(P,T,\varepsilon)$ is $H$-free. Denote $\mathcal{F}$ the family of all forbidden $3$-graphs and $\mathcal{T}_\mathcal{F}\subseteq S$ the set of all triangles $T$ such that there exists $\varepsilon=\varepsilon(T)>0$ such that for every point set $P\subseteq \mathbb{R}^2$, $G(P,T,\varepsilon)$ is $\mathcal{F}$-free. Given $T\in \mathcal{T}_\mathcal{F}$, we denote some $\varepsilon(T)>0$ to be a positive real number such that for every point set $P\subseteq \mathbb{R}^2$, $G(P,T,\varepsilon(T))$ is $\mathcal{F}$-free.

In our definition of forbidden $3$-graphs we restrict the size to be at most $12$. The reason we choose the number $12$ is that the largest forbidden subgraph we need for our proof has size $12$ and we try to keep the family $\mathcal{F}$ to be small. 

We will prove Theorem~\ref{pointsinplanemainasymp}, Theorem~\ref{pointinplanemainrekursion} and Corollary~\ref{pointsinplanecorol} for all triangles $T\in \mathcal{T}_\mathcal{F}$.  Note that by the definition of $\mathcal{F}$, almost all triangles are in $\mathcal{T}_\mathcal{F}$. B\'ar\'any and F\"uredi~\cite{MR3953886} determined the following hypergraphs to be members of $\mathcal{F}$.

\begin{lemma}[B\'ar\'any and F\"uredi~\cite{MR3953886}, see Lemma 11.2]
\label{pointsinplaneforbiden1}
 The following hypergraphs are members of $\mathcal{F}$.\\
 \begin{minipage}[t]{.4\textwidth}
\begin{itemize}
    \item $K_4^-=\{123,124,134\}$
    \item $C_5^-=\{123,124,135,245\}$
    \item  $C_5^+=\{126,236,346,456,516\}$
    \item $L_2=\{123,124,125,136,456\}$
    \item $L_3=\{123,124,135,256,346\}$
\end{itemize}
\end{minipage}
\begin{minipage}[t]{.5\textwidth}
    \begin{itemize}[leftmargin=*]
    \item $L_4=\{123,124,156,256,345\}$
    \item $L_5=\{123,124,135,146,356\}$
    \item $L_6=\{123,124,145,346,356\}$
    \item  $P_7^-=\{123,145,167,246,257,347\}.$
\end{itemize}
\end{minipage}
\end{lemma}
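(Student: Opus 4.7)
The plan is to realize each forbidden 3-graph $H$ as the solution set of a polynomial system over $\mathbb{C}$, and show that this system is nontrivially constrained in the shape parameter of $T$, so that for almost every $T$ the system has no non-degenerate solution. Identify $\mathbb{R}^2$ with $\mathbb{C}$ and parametrize $T$ by $t\in\mathbb{C}\setminus\mathbb{R}$ with reference vertices $0,1,t$. A triple $z_a,z_b,z_c\in\mathbb{C}$ is exactly similar to $T$ if and only if the ratio $(z_c-z_a)/(z_b-z_a)$ lies in a set $\mathcal{R}(t)\subseteq\mathbb{C}$ of at most twelve values, arising from the six permutations of the vertex labels of $T$ and the two orientations (direct and opposite similarity). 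For $\varepsilon$-similarity the ratio must lie in an $O(\varepsilon)$-neighborhood of $\mathcal{R}(t)$, with the implied constant depending continuously on $T$ away from degenerate shapes.

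The proof will then proceed by contradiction. Suppose some $H$ in the list is not in $\mathcal{F}$: then there is $\mathcal{T}^{*}\subseteq S$ of positive Lebesgue measure such that for each $T\in\mathcal{T}^{*}$ one can find $\varepsilon_n\downarrow 0$ and point sets $P_n\subseteq\mathbb{R}^2$ containing a copy of $H$ in $G(P_n,T,\varepsilon_n)$ on vertices $z_1^{(n)},\dots,z_k^{(n)}$. After an affine normalization sending two designated vertices to $0$ and $1$, a compactness argument produces a subsequential limit $z_1,\dots,z_k$ that realizes $H$ exactly, so that each edge $e\in E(H)$ gives a polynomial equation
\begin{equation*}
    f_{e,\sigma_e}(z_1,\dots,z_k;\,t)=0,
\end{equation*}
where $\sigma_e$ is one of the at most twelve labeling/orientation choices for $e$ and may be taken constant in $n$ along a further subsequence. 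For each fixed tuple $\sigma=(\sigma_e)_{e\in E(H)}$, eliminating $z_3,\dots,z_k$ by iterated resultants produces a single polynomial condition $p_\sigma(t)=0$.

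The heart of the argument is to verify, case by case for every $H$ in the list and every admissible $\sigma$, that either the system forces $z_i=z_j$ for some $i\neq j$ (impossible since the $z_i^{(n)}$ are distinct for large $n$) or $p_\sigma\not\equiv 0$. Granting this, each $\{t:p_\sigma(t)=0\}$ is a proper real algebraic subvariety of $\mathbb{C}\cong\mathbb{R}^2$, and the finite union over the nine hypergraphs and finitely many $\sigma$ still has planar measure zero, contradicting $\mathrm{meas}(\mathcal{T}^{*})>0$. The main obstacle is exactly this verification: for $K_4^{-}$ and the other small cases it reduces quickly to inspecting a cross-ratio-type identity among a few elements of $\mathcal{R}(t)$, but for $P_7^{-}$ (six edges on seven vertices) the number of $\sigma$'s is large, and a handful of highly symmetric shapes---notably the equilateral and the isosceles right triangle---genuinely satisfy some $p_\sigma=0$ and must be absorbed into the measure-zero exceptional set. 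Since each case reduces to a finite symbolic elimination and the exceptional loci are one-dimensional, the computation is tedious but mechanical; it is carried out in Lemma~11.2 of~\cite{MR3953886}, which I would invoke after setting up the reduction above.
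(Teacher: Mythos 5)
There is a genuine gap, and it sits precisely in the part of the argument that you add yourself. Note first that the paper does not reprove this lemma: it is quoted verbatim from B\'ar\'any and F\"uredi (their Lemma 11.2), and the only argument reproduced in the paper is the one for the new graphs $L_7,\dots,L_{10}$ in Lemma~\ref{pointsinplaneforbiden2}. Since you also end by invoking Lemma 11.2 of \cite{MR3953886} for the case-by-case verification, the mathematical content you contribute is the compactness-and-elimination scaffolding, and that scaffolding does not hold up. After normalizing two vertices to $0$ and $1$ and letting $\varepsilon_n\downarrow 0$, distinct points $z_i^{(n)}\neq z_j^{(n)}$ can perfectly well converge to a common limit, so your dismissal of coincidences as ``impossible since the $z_i^{(n)}$ are distinct for large $n$'' is a non sequitur. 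Worse, for graphs such as $P_7^-$, where some edges meet the rest of the configuration in a single vertex (the edge $145$ meets $123$ only at vertex $1$), the normalization controls neither the scale nor the position of such an edge: the whole triangle $145$ may shrink onto $z_1$ along the sequence while every edge stays $\varepsilon_n$-similar to $T$. The limit then satisfies only a degenerate subsystem (e.g.\ $z_4=z_5=z_1$ with triangles $123,167,216,217,317$ exactly similar to $T$), not the full system for $H$, so the polynomial condition $p_\sigma(t)\equiv 0$ you want to contradict is not what the limit produces; to close the argument you would have to analyze all collapsed configurations separately, which the proposal never does. There is also a smaller measurability point (failure of ``almost every'' only gives positive outer measure without an argument that the bad set of shapes is measurable), and a factual slip: the equilateral triangle is not an exceptional shape to be absorbed into the measure-zero set --- in the actual argument it is the witness at which one checks that the finitely many equations are not identities (the isosceles right triangle is the genuinely exceptional example).

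For contrast, the B\'ar\'any--F\"uredi method, which the paper repeats for $L_7,\dots,L_{10}$, needs no limits at all: it fixes $T$ (encoded by $z$) and a small $\varepsilon$, uses a vertex ordering in which every new vertex lies in an edge with two earlier ones, so that each successive point is pinned to a $\delta$-neighborhood of one of at most $12^{r-3}$ explicit functions $f_{i,j}(z)$, and obtains the contradiction for the last vertex from the disjointness of the two neighborhood systems coming from its two edges whenever none of the finitely many equations $f_{r,j,\ell}(z)=g_{r,j,\ell'}(z)$ holds; nontriviality of each equation is certified by exhibiting one $z$ (the equilateral shape) violating it, and the measure-zero conclusion follows from the finite union of solution sets. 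If you want a self-contained proof rather than the citation, that fixed-$\varepsilon$ bookkeeping is the route to follow; your limiting argument would first have to be repaired to handle degenerating edges, which is exactly the difficulty it was meant to bypass.
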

\noindent
For the non-computer assisted part our proof, we will need to extend this list. For the computer assisted part, we excluded additional graphs on $7$ and $8$ vertices.
\begin{lemma}
\label{pointsinplaneforbiden2}
 The following hypergraphs are members of $\mathcal{F}$.
\begin{itemize}
    \item  $L_7=\{123,124,125,136,137,458,678\}$
    \item $L_8=\{123,124,125,136,137,468,579,289\}$
    \item $L_9=\{123,124,125,136,237,469,578,189\}$
    \item $L_{10}=\{123, 124, 125, 126, 137, 138, 239, 58a, 47b, 69c, abc\}.$
\end{itemize}
\end{lemma}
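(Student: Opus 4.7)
The plan is to follow the complex-number framework employed by B\'ar\'any and F\"uredi in the proof of Lemma~\ref{pointsinplaneforbiden1}. Identify $\mathbb{R}^2$ with $\mathbb{C}$. For a triangle shape $T\in S$ there is a finite set $\Omega(T)\subseteq\mathbb{C}$ of six complex numbers (three vertex correspondences times two orientations) such that three distinct points $z_1,z_2,z_3\in\mathbb{C}$ form a triangle similar to $T$ if and only if $(z_3-z_1)/(z_2-z_1)\in\Omega(T)$. For $\varepsilon$-similarity the ratio is required to lie in $U_\delta(\Omega(T))$ for some $\delta=\delta(\varepsilon)$ with $\delta\to 0$ as $\varepsilon\to 0$.

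Fix $L\in\{L_7,L_8,L_9,L_{10}\}$ and suppose for contradiction that for arbitrarily small $\varepsilon>0$ there is a point set $P\subseteq\mathbb{R}^2$ and an injection $\phi\colon V(L)\to P$ mapping every hyperedge of $L$ to an $\varepsilon$-similar copy of $T$. Applying the direct similarity $z\mapsto (z-\phi(1))/(\phi(2)-\phi(1))$, we may assume $\phi(1)=0$ and $\phi(2)=1$. The edges of $L$ can then be processed one by one so that each but one determines the location of a previously unknown vertex as an affine (hence rational) function of the parameters of $T$, with a six-fold choice coming from $\Omega(T)$. Concretely, for $L_7$ the edges $123,124,125,136,137,458$ successively fix $\phi(3),\dots,\phi(8)$ and $678$ remains as the extra constraint; for $L_8$ one uses $123,124,125,136,137,468,579$ and keeps $289$ as extra; for $L_9$ one uses $123,124,125,136,237,469,578$ and keeps $189$ as extra; for $L_{10}$ one uses $123,124,125,126,137,138,239,58a,47b,69c$ and keeps $abc$ as extra.

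For each combinatorial assignment of an element of $\Omega(T)$ to each processed edge, the extra edge yields a single polynomial equation in the parameters of $T$ (and their complex conjugates). The plan is to verify that in every such branch either the embedding is forced to degenerate (two determined vertices coincide, or a denominator vanishes) and so no $L$-copy exists, or the extra-edge equation is a nontrivial algebraic relation in $T$. Taking the finite union over all branches and all four hypergraphs gives a Lebesgue-null set $S'\subseteq S$ outside of which no $L_i$-copy can occur in $G(P,T,\varepsilon)$ once $\varepsilon<\varepsilon(T)$ is sufficiently small; by definition of $\mathcal{F}$, this proves $L_7,L_8,L_9,L_{10}\in\mathcal{F}$.

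The main obstacle is showing branch-by-branch that the extra-edge equation is not an algebraic identity in $T$. For $L_7, L_8, L_9$ the branching is modest and the arguments parallel those in \cite{MR3953886} for $P_7^-$ and $L_1,\ldots,L_6$: using the complex-conjugation symmetry of $\Omega(T)$ and the automorphisms of the hypergraph, one cuts the case analysis down to a few essentially distinct branches and checks each by direct computation. For $L_{10}$, however, the roughly $6^{10}$ combinatorial branches together with a depth-three chain of rational substitutions push this well beyond a comfortable hand check, and the intended resolution is to combine the same symmetry reductions with computer algebra, in line with the computer-assisted flavor of other parts of the proof.
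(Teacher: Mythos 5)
Your framework is the right one and matches the paper's: order the vertices so that each new vertex is pinned down (up to finitely many branches) by exactly one earlier edge, normalize $\phi(1)=0$, $\phi(2)=1$, and reduce the lemma to showing that, in every branch, the single leftover edge imposes an equation on the triangle parameter whose solution set is Lebesgue-null; the union over the finitely many branches then gives the null set $S'$. (Two small quibbles: the number of admissible positions of a third vertex over a fixed ordered base is up to $12$, not $6$, and you cannot simply discard ``degenerate'' branches as giving no copy --- in the $\varepsilon$-approximate setting nearly coincident determined positions do not by themselves preclude an embedding; fortunately neither point is needed, since only the nontriviality of the final equations matters.)

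The genuine gap is that the decisive step --- proving that no branch yields an identical equation --- is only announced, not carried out, and the route you sketch for it (symbolic verification of the leftover-edge relation for a \emph{general} triangle $T$, branch by branch, with symmetry reductions and computer algebra, over roughly $12^{10}$ branches for $L_{10}$) is both unexecuted and much heavier than necessary. The missing idea, which is exactly how the paper closes this step, is that to show each equation $f_{r,j,\ell}(z)=g_{r,j,\ell'}(z)$ is not an identity it suffices to exhibit a \emph{single witness} $z$ at which all of them fail simultaneously, and one may take $z=\tfrac12+i\tfrac{\sqrt3}{2}$, the equilateral triangle: there the twelve candidate positions per edge collapse to two (only the orientation choice survives), so all branches are covered by at most $2^{r-2}\le 2^{10}$ rigid point configurations, each of which is checked by a trivial brute-force program asking whether the points of the leftover edge form an equilateral triangle. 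Without this specialization (or some equally concrete verification) your argument establishes only the reduction, not the membership of $L_7,\dots,L_{10}$ in $\mathcal{F}$; with it, your plan becomes the paper's proof.
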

Note that this is not the complete list.
To verify that those hypergraphs are forbidden, we will we use the same method as B\'ar\'any and F\"uredi~\cite{MR3953886} used to show that the hypergraphs from Lemma~\ref{pointsinplaneforbiden1} are forbidden. For sake of completeness, we repeat their argument here. 
\begin{proof}
We call a $3$-graph $H$ on $r$ vertices $\emph{dense}$ if there exists a vertex ordering $v_1,v_2,\ldots, v_{r}$ such that for every $3\leq i \leq r-1 $ there exists exactly one edge $e_i\in E(H[\{v_1,\ldots,v_i\}])$ containing $v_i$, and there exists exactly two edges $e_{r}, e_{r}'$ containing $v_{r}$. Note that $L_7,L_8,L_9$ and $L_{10}$ are dense.

For convenience, we will work with a different representation of triangles shapes. A triangle shape $T\in S$ is characterized by a complex number $z \in \mathbb{C} \setminus \mathbb{R}$ such that the triangle with vertices $0,1,z$ is similar to $T$. Note that there are at most twelve complex numbers $w$ such that the triangle $\{0, 1, w\}$ is similar to $T$.

Let $H$ be a dense hypergraph on $r$ vertices with vertex ordering $v_1,\ldots, v_r$ and let $P=\{p_1,\ldots, p_r\}$ $\subseteq \mathbb{R}^2$ be a point set such that $G(P,T,\varepsilon)$ contains $H$ (with $p_i$ corresponding to $v_i$), where $\varepsilon$ is small enough such that the following argument holds. 
Let $\delta>0$ be sufficiently small.
Without loss of generality, we can assume that $p_1=(0,0)$ and $p_2=(1,0)$. Now, since $H$ is dense, $v_1v_2v_3\in E(H)$ and therefore $p_1p_2p_3$ forms a triangle $\varepsilon$-similar to $T$. Therefore, there exists at most $12$ points (which are functions in $z$) such that $p_3$ is in a $\delta$-neighborhood of one of them. Since, $v_4$ is contained in some edge with vertices from $\{v_1,v_2,v_3,v_4\}$, there are at most $12 \cdot 12=144$ points (which are functions in $z$) such that $q_4$ is in a $\delta$-neighborhood of one of them. Continuing this argument, we find functions $f_{i,j}(z)$ in $z$ where $3\leq i \leq r-1$ and    $j \leq 12^{r-3}$ such that
\begin{align*}(p_3, p_4,\ldots, p_{r})\in U_\delta(f_{3,j}(z)) \times  U_\delta(f_{4,j}(z)) \times \ldots \times U_\delta(f_{r-1,j}(z)) 
\end{align*}
for some $j \leq 12^{r-3}$.
Since $H$ is dense, $v_{r}$ is contained in exactly two edges $e_{r}$ and $e_{r}'$. For each $j \leq 12^{r-3}$, because $v_k\in e_{r}$, there exists at most $12$ points $f_{r,j,\ell}(z)$ where $\ell \leq 12$ such that 
\begin{align*}
    p_k \in U_\delta \left(f_{r,j,\ell}(z)\right).
\end{align*}
 Similarly, because $v_k\in e_{r}'$, there exists at most $12$ points $g_{r,j,\ell}(z)$ where $\ell' \leq 12$ such that 
\begin{align*}
    p_k \in U_\delta \left(g_{r,j,\ell'}(z)\right).
\end{align*}
Thus,
\begin{align}
\label{pointsinplanebigset}
p_k \in \bigcup_{\ell,\ell'\leq 12} U_\delta \left(f_{r,j,\ell}(z)\right) \cap U_\delta \left(g_{r,j,\ell'}(z)\right).
\end{align}
Note that if there exists a $z$ such that for each $1\leq j\leq 12^{r-3}$ none of the equations 
\begin{align}
\label{pointsinplaneequations}
    f_{r,j,\ell}(z)=g_{r,j,\ell'}(z), \quad \quad 1\leq \ell,\ell' \leq 12
\end{align}
hold, then we can choose $\varepsilon>0$ such that
\begin{align}
\label{pointsinplanedelta}
\delta< \frac{1}{3} \max_{\ell,\ell'}|f_{r,j,\ell}(z)-g_{r,j,\ell'}(z)|,
\end{align}
and therefore the set in \eqref{pointsinplanebigset} is empty, contradicting that $G(P,T,\varepsilon)$ contains a copy of $H$.
Note that, because of \eqref{pointsinplanedelta}, $\varepsilon$ depends on $z$ and therefore on $T$. 
If we could find one $z\in \mathbb{C}$ not satisfying any of the  equations in \eqref{pointsinplaneequations}, then each of the equations is non-trivial (the solution space is not $\mathbb{C}$). Thus, for each equation the solution set has Lebesque measure 0. Since there are only at most $12^{r-2}$ equations, the union of the solution sets still has measure $0$. Thus, we can conclude that for almost all triangles $T$ there exists $\varepsilon$ such that $G(P,T,\varepsilon)$ is $H$-free for every point set $P$. It remains to show that for $H\in \{L_7,L_8,L_9,L_{10}\}$ there exists $z\in \mathbb{C}$ not satisfying any of the equations in \eqref{pointsinplaneequations}. We will show this for a $z$ corresponding to the equilateral triangle ($z= \frac{1}{2}+i \cdot \frac{\sqrt{3}}{2}$). For $T$ being the equilateral triangle, there are at most $2^{r-2}$ equations to check. Because of the large amount of cases, we will use a computer to verify it. 

Our computer program is a simple brute force recursive approach. It starts by embedding $p_1=(0,0)$ and $p_2=(1,0)$. For each subsequent $3 \leq i \leq r$ it tries both options for embedding $p_i$ dictated by $e_i$. Finally, it checks if the points forming $e'_{r}$ form an equilateral triangle.
If in none of the $2^{r-2}$ generated point configurations the points of $e'_{r}$ form an equilateral triangle, then $H$ is a member of $\mathcal{F}$.
An implementation of this algorithm in python is available at \url{http://lidicky.name/pub/triangle}.
This completes the proof of Lemma~\ref{pointsinplaneforbiden2}.
\end{proof}

Instead of Theorem~\ref{pointsinplanemainasymp} we will actually prove the following stronger result.
\begin{theo}
\label{pointinplaneturanmainrekursion}
We have
\begin{align*}\textup{ex}(n,\mathcal{F})= 0.25\binom{n}{3}(1+o(1)).
\end{align*}
\end{theo}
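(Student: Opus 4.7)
The theorem reduces to matching lower and upper bounds. For the lower bound $\textup{ex}(n,\mathcal{F}) \geq \tfrac{1}{4}\binom{n}{3}(1+o(1))$, I would exhibit $S(n)$ as an $\mathcal{F}$-free witness. A standard recursion gives $|S(n)| = \tfrac{1}{24}n^3 - O(n \log n)$ as recorded in Section~\ref{pointsinplanepreperation}, so it suffices to verify $\mathcal{F}$-freeness. Realize $S(n)$ as $G(P,T,\varepsilon)$ for the iterated planar construction of B\'ar\'any--F\"uredi: place $n$ points in three clusters near the vertices of $T$ and recurse inside each cluster, choosing $T \in \mathcal{T}_\mathcal{F}$ and $\varepsilon$ small enough that the only $\varepsilon$-similar triangles in $P$ are exactly those prescribed by the recursion. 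By the definition of $\mathcal{T}_\mathcal{F}$ this realization is $\mathcal{F}$-free, so $S(n)$ is $\mathcal{F}$-free as well.

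For the upper bound, I would apply Razborov's flag algebra method directly to the family $\mathcal{F}$. Fix a flag size $k \in \{7,8\}$, enumerate all $\mathcal{F}$-free $3$-graphs on $k$ vertices, and, for several small types $\sigma$, enumerate the $\sigma$-flags of half-size $(k+|\sigma|)/2$. The Cauchy--Schwarz identity in flag algebras yields one positive-semidefinite constraint per type, and summing across types produces an inequality of the form $d(e) \leq 1/4 + (\text{non-positive terms})$ uniformly for $\mathcal{F}$-free $3$-graphs, where $d(e)$ denotes the edge density. A numerical SDP solver (CSDP, SDPA-dd, or MOSEK) is then used to produce a feasible dual solution of value close to $1/4$, which is rounded to a verified rational certificate via the standard rank-reduction and exact-arithmetic checks.

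The decisive point is that the SDP closes at $1/4$ only for a sufficiently rich forbidden family: Theorem~\ref{pointinplaneasymptotic} shows that the original list in Lemma~\ref{pointsinplaneforbiden1} alone stalls at $0.25072$, and as the authors indicate, further forbidden subgraphs on $7$ and $8$ vertices are needed to tighten the SDP bound to $1/4$. These extra constraints would be identified by inspecting near-optimal primal solutions of the smaller SDP and locating density patterns that the extremal construction $S(n)$ avoids but that the weaker forbidden family fails to rule out. The main obstacle is therefore computational and experimental: enumerating flags, choosing types that couple well, solving a large SDP to high precision, performing a rigorous rational rounding, and verifying $\mathcal{F}$-freeness throughout the enumeration. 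Once a certificate of value $1/4$ is obtained, the upper bound on $\textup{ex}(n,\mathcal{F})$ follows immediately from the flag-algebra machinery.
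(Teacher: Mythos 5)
Your lower bound is essentially the paper's (exhibit $S(n)$, realized inside $G(P,T,\varepsilon)$ for the iterated planar construction, hence $\mathcal{F}$-free), and that part is fine. The upper bound, however, rests on a step that is precisely the obstacle of the whole problem and that the paper's own computations show has not been achieved: you assume that, after enriching the forbidden family, a plain flag-algebra SDP will certify the density bound $1/4$ directly. The paper reports that the direct computation on $7$ vertices gives only $0.25072\binom{n}{3}$ (see \eqref{pointsinplaneturanupper}), and that even on $8$ vertices with additional members of $\mathcal{F}$ forbidden the bound improves only to $0.2502\binom{n}{3}$ (see \eqref{ourbound}); the closely related Conjecture~\ref{conj:frv} remains open. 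The extremal configuration here is an iterated blow-up, exactly the situation in which plain flag-algebra certificates at a fixed flag size are known to struggle, and ``inspecting near-optimal primal solutions to find more forbidden subgraphs until the SDP closes at $1/4$'' is an unsupported hope, not a proof step. So as written, your upper bound has a genuine gap.

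The paper's actual route is different and is worth contrasting with yours. Flag algebras are used only to prove the weak stability statement of Lemma~\ref{flaginductive}: a lower bound on the quantity \eqref{eq:ai} for an optimally chosen copy of $T_{2,2,1}$, which produces an edge $x_1x_2x_3$ whose neighborhoods are disjoint, each of size at least $0.26n$, and together cover all but $0.012n$ of the vertices. This rough structure is then upgraded by purely combinatorial cleaning, using the specially proven forbidden graphs $L_7,\dots,L_{10}$ of Lemma~\ref{pointsinplaneforbiden2} together with a minimum-degree condition obtained by vertex replication (Lemmas~\ref{pointsinplaneaddvertex} and~\ref{pointsplanemindegree}), to an exact $3$-partition in which no edge has two vertices in one part and its third vertex in another (Lemma~\ref{Fextrpartition}). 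The bound $\textup{ex}(n,\mathcal{F})\le \frac{1}{24}n^3+n\cdot n_0^2$ is then proved by induction on $n$, applying the partition to an extremal $G$, using the induction hypothesis inside each part, and maximizing $x_1x_2x_3+\frac{1}{24}(x_1^3+x_2^3+x_3^3)$ over the constrained region; this is how the recursive nature of the extremal example is exploited. Without an argument of this stability-plus-induction type (or a genuinely new certificate that no one currently has), your proposed upper bound does not go through.
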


First, we observe that Theorem~\ref{pointinplaneturanmainrekursion} implies Theorem~\ref{pointsinplanemainasymp}. Let $P\subseteq \mathbb{R}^2$ be a point set of size $n$ and let $T\in \mathcal{T}_{\mathcal{F}}$. Then, $G(P,T,\varepsilon(T))$ is $\mathcal{F}$-free. Now, the number of $\varepsilon$-similar triangles $T$ equals the number of edges in $G(P,T,\varepsilon(T))$. Since $G(P,T,\varepsilon(T))$ is $\mathcal{F}$-free, we have 
\begin{align*}
h(n,T,\varepsilon)\leq \textup{ex}(n,\mathcal{F}).
\end{align*}
Therefore, Theorem~\ref{pointinplaneturanmainrekursion} implies Theorem~\ref{pointsinplanemainasymp}.

\subsection{A structural result via Flag Algebras}
It is a standard application of flag algebras to determine an upper bound for $\textup{ex}(n,\mathcal{G})$ given a family $\mathcal{G}$ of 3-uniform hypergraphs. Running the method of flag algebras on $7$ vertices, B\'ar\'any and F\"uredi~\cite{MR3953886} obtained  
\begin{align}
\label{pointsinplaneturanupper}
    \textup{ex}(n,\mathcal{F})\leq \textup{ex}(n,\{K_4^-,C_5^-,C_5^+,L_2,L_3,L_4,L_5,L_6,P_7^-\})\leq 0.25072 \binom{n}{3}(1+o(1)).
\end{align}
It is conjectured in~\cite{RavryTuran} that $\textup{ex}(n,\{K_4^-,C_5\}) = 0.25 \binom{n}{3}(1+o(1))$.
We note that when running flag algebras on $8$ vertices and forbidding more $3$-graphs in $\mathcal{F}$, then we can obtain the following improved bound.
\begin{align}
    \textup{ex}(n,\mathcal{F}))\leq 0.2502 \binom{n}{3}(1+o(1)).
    \label{ourbound}
\end{align}
Note that Conjecture~\ref{conj:frv} is a significant strengthening of \eqref{pointsinplaneturanupper} and \eqref{ourbound}.
We use flag algebras to prove a stability result. For an excellent explanation of flag algebras in the setting of $3$-graphs see~\cite{RavryTuran}. Here, we will focus on the formulation of the problem rather than providing a formal explanation of the general method.
As a consequence, we obtain the following lemma, which gives the first rough structure of extremal constructions.
This approach was developed in~\cite{MR3425964} and~\cite{MR3667664}.

\begin{lemma}
\label{flaginductive}
Let $n\in \mathbb{N}$ be sufficiently large and let $G$ be an $\mathcal{F}$-free $3$-graph on $n$ vertices and $|E(G)|\geq 1/24 n^3(1+o(1))$ edges. Then there exists an edge $x_1x_2x_3\in E(G)$ such that for $n$ large enough
  \begin{enumerate}
      \item[\textup{(i)}] the neighborhoods $N(x_1,x_2),N(x_2,x_3)$, and $N(x_1,x_3)$ are pairwise disjoint. 
      \item[\textup{(ii)}]  $\min\{|N(x_1,x_2)|, |N(x_2,x_3)|, |N(x_1,x_3)|\} \geq 0.26n.$
      \item[\textup{(iii)}]       
      $n - |N(x_1,x_2)|-|N(x_2,x_3)|-|N(x_1,x_3)| \leq 0.012n.$      
  \end{enumerate}
\end{lemma}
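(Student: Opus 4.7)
The plan is to combine $K_4^-$-freeness with a flag algebra computation to show that, in any near-extremal $\mathcal{F}$-free $3$-graph $G$, all but $o(n^3)$ edges already satisfy conditions (i)--(iii), so the desired edge can be chosen essentially arbitrarily.

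First, condition (i) is automatic for \emph{every} edge of $G$: if some $v\notin\{x_1,x_2,x_3\}$ lay in $N(x_1,x_2)\cap N(x_2,x_3)$, then $\{x_1,x_2,x_3,v\}$ would carry the three edges $x_1x_2x_3$, $x_1x_2v$, $x_2x_3v$, a copy of $K_4^-\in\mathcal{F}$; the other two pairwise intersections are excluded identically. Since $x_i\in N(x_j,x_k)$ exactly when $\{i,j,k\}=\{1,2,3\}$, the neighborhoods are also disjoint on $\{x_1,x_2,x_3\}$, so $|N(x_1,x_2)|+|N(x_2,x_3)|+|N(x_1,x_3)|$ equals the size of their union. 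Conditions (ii) and (iii) together then amount to asking that this union be balanced and miss at most $0.012n$ vertices.

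Next, for each edge $e=\{x_1,x_2,x_3\}\in E(G)$ define the defect
\[
b(e)\;:=\;\#\bigl\{v\in V(G)\setminus e:\{x_1,x_2,x_3,v\}\ \text{induces exactly one edge in }G\bigr\},
\]
which is precisely $n-|N(x_1,x_2)|-|N(x_2,x_3)|-|N(x_1,x_3)|$, the quantity constrained by (iii). Letting $F$ be the $4$-vertex $3$-graph with exactly one edge, each induced copy of $F$ in $G$ corresponds bijectively to a pair (edge, outside vertex), so $\sum_{e\in E(G)} b(e)$ equals the number of induced $F$-copies in $G$. The core of the argument is then to feed the enlarged family $\mathcal{F}$ from Lemma~\ref{pointsinplaneforbiden2} into the flag algebra SDP at $N=8$, which already yields~\eqref{ourbound}, and extract a dual certificate of the form
\[
\frac{\#\{\text{induced }F\text{-copies in }G\}}{\binom{n}{4}}\;\leq\;C_1\!\left(\frac{1}{4}-\frac{|E(G)|}{\binom{n}{3}}\right)+o(1).
\]
The hypothesis $|E(G)|\geq\frac{n^3}{24}(1+o(1))$ then forces $\sum_e b(e)=o(n^4)$, and Markov's inequality shows that at most $o(n^3)$ edges have $b(e)\geq 0.012n$.

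An entirely analogous argument yields (ii): if some $|N(x_i,x_j)|<0.26 n$ for an edge $e$, then $e$ can be charged to a weighted count of $5$-vertex induced subgraph densities (an edge plus two extra vertices with a prescribed link pattern on the edge), and a second flag algebra certificate bounds this count by $C_2\bigl(\frac{1}{4}-|E(G)|/\binom{n}{3}\bigr)+o(1)=o(1)$. Consequently, the edges violating (ii) or (iii) number only $o(n^3)$, strictly fewer than $|E(G)|=\Theta(n^3)$ for $n$ large, so an edge satisfying all three conditions exists. The main obstacle is the flag algebra step: the SDP must be set up, in the spirit of~\cite{MR3425964,MR3667664}, so that its dual certificate simultaneously controls the number of induced $F$-copies \emph{and} the relevant $5$-vertex densities as linear functions of the edge-density defect $\frac{1}{4}-|E(G)|/\binom{n}{3}$. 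Once such a certificate is produced by the solver, the averaging arguments described above are routine.
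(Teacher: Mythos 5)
There is a genuine gap: the flag algebra certificates your argument hinges on cannot exist, because the statement they would certify is false. You want to show that all but $o(n^3)$ edges satisfy (ii) and (iii), via a dual bound of the form $\#\{\text{induced }F\}/\binom{n}{4}\le C_1\bigl(\tfrac14-|E(G)|/\binom{n}{3}\bigr)+o(1)$ and an analogous $5$-vertex bound. Test this against the conjectured extremal construction $S(n)$, which is $\mathcal{F}$-free and has $\tfrac1{24}n^3(1+o(1))$ edges (its density is in fact slightly above $\tfrac14\binom{n}{3}$, so your right-hand side is $o(1)$ or even negative). About an $8/9$ fraction of its edges cross the three top parts $X_1,X_2,X_3$, but the remaining $\Theta(n^3)$ edges lie inside a single part, say $e\subseteq X_1$: no vertex of $X_2\cup X_3$ forms an edge with a pair from $e$, so $b(e)\ge|X_2|+|X_3|=\tfrac{2n}{3}(1+o(1))\gg 0.012n$, and each pair-neighborhood of $e$ has size roughly $n/9<0.26n$. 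Hence $\sum_e b(e)=\Theta(n^4)$, the induced-$F$ density is bounded away from $0$ at edge density $\tfrac14+o(1)$, and a constant proportion of edges violates both (ii) and (iii). So no SDP dual can produce your certificate, the Markov step collapses, and the conclusion ``only $o(n^3)$ bad edges'' is simply not true; the analogous certificate for (ii) fails for the same reason. (Your part (i), disjointness of the neighborhoods via $K_4^-$, is correct and is exactly what the paper does.)

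The lemma holds not because a typical edge works but because a well-chosen one does, and this is where the paper's proof differs in an essential way: it fixes a copy $X$ of $T_{2,2,1}$ on $x_1,x_1',x_2,x_2',x_3$ and the intersected neighborhoods $A_1=N(x_2,x_3)\cap N(x_2',x_3)$, $A_2=N(x_1,x_3)\cap N(x_1',x_3)$, $A_3=N(x_1,x_2)\cap\cdots$, chooses $X$ to \emph{maximize} the quadratic functional $a_1a_2+a_1a_3+a_2a_3-\tfrac14(a_1^2+a_2^2+a_3^2)$, and uses flag algebras (Claim~\ref{pointsinplaneflagclaim}, conditioned on $t_{1,1,1}\ge 0.25$) only to lower-bound the \emph{average} of this functional over all $T_{2,2,1}$-bases by $0.24406$; the maximizer then attains at least that value, and (ii), (iii) follow by elementary optimization (if $a_1<0.26$ or $a_1+a_2+a_3\le 0.988$ the functional drops below the bound). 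Note the paper even remarks that taking a random single edge ($T_{1,1,1}$ base) already gives bounds too weak to proceed, which is a further sign that any per-edge averaging scheme like yours cannot reach the stated constants. To repair your write-up you would need to replace the ``almost every edge'' claim by a ``maximize over a suitable base'' argument of this kind.
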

\begin{proof}
Denote $T_{i,j,k}$ the family of $3$-graphs that are obtained from a complete $3$-partite $3$-graph with part sizes $i$, $j$ and $k$ by adding $\mathcal{F}$-free $3$-graphs in each of the three parts. Let $X$ be a subgraph of $G$ isomorphic to $T_{2,2,1}$ on vertices $x_1,x_1',x_2,x_2',x_3$  with edges
$
x_1x_2x_3, x_1x_2'x_3, x_1'x_2x_3, x_1'x_2'x_3
$. Further, define
\begin{align*}
A_1&:=N(x_2,x_3) \cap N(x_2',x_3), &  
A_3&:=N(x_1,x_2) \cap N(x_1',x_2)  \cap N(x_1,x_2') \cap N(x_1',x_2'), \\
 A_2&:=N(x_1,x_3) \cap N(x_1',x_3),
 &
J&:= V(G) \setminus \left( A_1 \cup A_2 \cup A_3 \right).
\end{align*}
Let $a_i := |A_i|/n$ for $1 \leq i \leq 3$. Note that $V(G)=A_1 \cup A_2 \cup A_3 \cup J$ is a partition, because the sets $N(x_1,x_2),N(x_1,x_3)$ and $N(x_2,x_3)$ are pairwise disjoint. Indeed, without loss of generality, assume $N(x_1,x_2) \cap N(x_1,x_3)\neq \emptyset$. Let $v\in N(x_1,x_2) \cap N(x_1,x_3)$. Then $v,x_1,x_2,x_3$ spans at least $3$ edges and therefore $G$ contains a copy of $K_4^-$, a contradiction. We choose $X$ such that
\tikzset{vtx/.style={inner sep=1.1pt, outer sep=0pt, circle, fill,draw}}
\tikzset{lab/.style={inner sep=1.5pt, outer sep=0pt, draw}}

\begin{align}
a_1a_2+a_1a_3+a_2a_3 - \frac{1}{4}\left(a_1^2 + a_2^2 + a_3^2 \right) \label{eq:ai}
\end{align}
is maximized.
Flag algebras can be used to give a lower bound on the expected value of \eqref{eq:ai} for $X$ chosen uniformly at random and therefore also a lower bound on \eqref{eq:ai} when $X$ is chosen to maximize \eqref{eq:ai}. 

Let $Z$ be a fixed \emph{labeled} subgraph of $G$ belonging to $T_{i',j',k'}$.
Denote by $T_{i,j,k}(Z)$ the family of subgraphs of $G$ that contain $Z$, belong to $T_{i,j,k}$, where $i' \leq i$, $j' \leq j$, and $k' \leq k$, and the natural three parts of $Z$ are mapped to the same 3 parts in $T_{i,j,k}(Z)$. The normalized number of $T_{i,j,k}(Z)$ is  
\[
t_{i,j,k}(Z) :=\frac{|T_{i,j,k}(Z)|}{ \binom{n-|V(Z)|}{i+j+k-|V(Z)|}}.
\]
The subgraphs of $G$ isomorphic to $T_{i,j,k}$ are denoted by $T_{i,j,k}(\emptyset)$.
The normalized number is
\[
t_{i,j,k} := \frac{|T_{i,j,k}(\emptyset)|}{\binom{n}{i+j+k}}.
\]

Notice that $a_1  = t_{3,2,1}(X) + o(1)$, 
$2a_1a_2  = t_{3,3,1}(X) + o(1)$, and $a_1^2 = t_{4,3,1}(X)+o(1)$.
We start with \eqref{eq:ai} and obtain the following.
 \begingroup
\allowdisplaybreaks
\begin{align*}
    &~  \left(a_1a_2+a_1a_3+a_2a_3 - \frac{1}{4}\left(a_1^2 + a_2^2 + a_3^2 \right) \right) n^2\\
    =&~  \left(2a_1a_2+2a_1a_3+2a_2a_3 - \frac{1}{2}\left(a_1^2 + a_2^2 + a_3^2 \right) \right) \binom{n-5}{2} + o(n^2)\\
    =&~
    \left(
    t_{3,3,1}(X) + t_{3,2,2}(X) + t_{2,3,2}(X) 
    - \frac{1}{2} \left(
    t_{4,2,1}(X) + t_{2,4,1}(X) + t_{2,2,3}(X)
    \right)
    \right) \binom{n-5}{2}\\ &+o(n^2) \\
    \geq&~
    \frac{1}{t_{2,2,1} \binom{n}{5}}
    \Bigg(
    \sum_{Y \in T_{2,2,1}(\emptyset)}
    (
        t_{3,3,1}(Y) + t_{3,2,2}(Y) + t_{2,3,2}(Y) 
    \\ &- \frac{1}{2} \left(
    t_{4,2,1}(Y) + t_{2,4,1}(Y) + t_{2,2,3}(Y)
    \right)
    \Bigg) \binom{n-5}{2}
    +o(n^2) \\
 \geq&~
    \frac{1}{t_{2,2,1} \binom{n}{5}}
    \left(
    9\, t_{3,3,1} + 12\, t_{3,2,2}
    - \frac{1}{2} \left(
    6\, t_{4,2,1} + 3\, t_{2,2,3}
    \right)
    \right)    
\binom{n}{7}+o(n^2)\\
 =&~  
    \frac{1}{7\,t_{2,2,1} }
    \left(
    3\, t_{3,3,1} + 3.5\, t_{3,2,2}
    -  \,  t_{4,2,1}
    \right)    
\binom{n-5}{2}+o(n^2).
\end{align*}
\endgroup

\begin{claim}
\label{pointsinplaneflagclaim}
Using flag algebras, we get that
if $\, t_{1,1,1} \geq 0.25$ then 
\[
    \frac{1}{7\,t_{2,2,1} }
    \left(
    3\, t_{3,3,1} + 3.5\, t_{3,2,2}
    -  \,  t_{4,2,1}
    \right)  
    \geq \frac{1.2814228}{ 7 \cdot 0.37502377}> 0.48813.
    \]
\end{claim}
The calculations for Claim~\ref{pointsinplaneflagclaim} are computer assisted; we use CSDP~\cite{csdp} to calculate numerical solutions of semidefinite programs. The data files and programs for the calculations are available at \url{http://lidicky.name/pub/triangle}. Claim~\ref{pointsinplaneflagclaim} gives a lower bound on \eqref{eq:ai} as follows
\begin{align}
a_1a_2+a_1a_3+a_2a_3 - \frac{1}{4}\left(a_1^2 + a_2^2 + a_3^2 \right) \geq \frac{1.2814228}{14 \cdot 0.37502377} > 0.24406. \label{eq:a}
\end{align}
Notice that if $a_1=a_2=a_3=\frac{1}{3}$, then  \eqref{eq:ai}, which is the left hand side of \eqref{eq:a}, is $0.25$. The conclusions (ii) and (iii) of Lemma~\ref{flaginductive}
can be obtained from \eqref{eq:a}. Indeed, assume $a_1< 0.26$. Then,
\begin{align*}
&a_1a_2+a_1a_3+a_2a_3 - \frac{1}{4}\left(a_1^2 + a_2^2 + a_3^2 \right) \\
&\leq a_1 (
1-a_1) + \left(\frac{1-a_1}{2}\right)^2- \frac{1}{4}\left(a_1^2 + 2\left(\frac{1-a_1}{2}\right)^2 \right) \\
&= -\frac{9}{8}a_1^2+\frac{3}{4}a_1+\frac{1}{8}< -\frac{9}{8}0.26^2+\frac{3}{4}0.26+\frac{1}{8}=0.24325,
\end{align*}
contradicting \eqref{eq:a}. Thus, we have $a_1\geq 0.26$, concluding (ii). Next, assume $a_1+a_2+a_3 \leq 0.988$. Then,
\begin{align*}
&a_1a_2+a_1a_3+a_2a_3 - \frac{1}{4}\left(a_1^2 + a_2^2 + a_3^2 \right) \\
&\leq a_1 (
0.988-a_1) + \left(\frac{0.988-a_1}{2}\right)^2- \frac{1}{4}\left(a_1^2 + 2\left(\frac{0.988-a_1}{2}\right)^2 \right) \\
&= -\frac{9}{8}a_1^2+0.741a_1+0.122018 \leq \frac{61009}{250000} < 0.244037, 
\end{align*}
where in the last step we used that the maximum is obtained at $a_1=247/750$. This contradicts \eqref{eq:a}. Thus, we have $a_1+a_2+a_3 \geq 0.988$, concluding (iii).
\end{proof}

In the proof of Lemma~\ref{flaginductive}, we chose a suitable copy of $T_{2,2,1}$ to find the initial 3-partition. One could do the same approach by starting with base $T_{1,1,1}$ instead. However, the resulting bounds would be weaker and not sufficient for the rest of the proof.
This is caused by obtaining a lower bound on \eqref{eq:ai} by taking a random base. 


\section{Proof of Theorem~\ref{pointinplaneasymptotic}}
\label{pointsinplanemainsec}
In this section, we will strengthen our flag algebra result Lemma~\ref{flaginductive} by applying cleaning techniques. 
\subsection{The top layer}

\begin{lemma}
\label{pointsinplanepartition}
  Let $G$ be an $\mathcal{F}$-free $3$-graph on $n$ vertices and $|E(G)|\geq 1/24 n^3(1+o(1))$, satisfying $|L(w)|\geq \frac{1}{8}n^2(1+o(1))$ for every $w\in V(G)$. Then there exists an edge $x_1x_2x_3\in E(G)$ such that for 
  \begin{gather*}
      A_1:=N(x_2,x_3), \ \ A_2:=N(x_1,x_3), \ \ A_3:=N(x_1,x_2), \ \ J:=V(G)\setminus (A_1\cup A_2 \cup A_3),\\ A_1':=A_1\setminus\{x_1\}, \ \ A_2':=A_2\setminus\{x_2\}, \ \ \text{and} \ \ A_3':=A_3\setminus\{x_3\}
  \end{gather*}
  we have for $n$ sufficiently large
  \begin{itemize}
      \item[(a)] $ 0.26n\leq |A_i|\leq 0.48n$ for $i\in[3]$.
      \item[(b)] $|J|\leq 0.012n$.
     \item[(c)] No triple $abc$ with $a,b\in A_i'$ and $c\in A_{j}'$ for some $i,j\in[3],i\neq j$ forms an edge.
      \item[(d)] For $v\in V(G)\setminus\{x_1,x_2,x_3\},\ w_1,w_2\in A_i'$, $u_1,u_2\in A_j'$ with $i,j\in[3]$ and $i\neq j$ we have $vw_1w_2\not\in E(G)$ or $vu_1u_2\not\in E(G)$.
      \item[(e)] For every $v\in V(G)\setminus\{x_1,x_2,x_3\}$, there exists $i\in[3]$ such that $|L_{A_j,A_k}(v)|\geq 0.001n^2$, where $j,k\in[3], j\neq k, j\neq i, k\neq i$.
  \end{itemize}
\end{lemma}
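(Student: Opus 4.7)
The plan is to apply Lemma~\ref{flaginductive} to produce the edge $x_1x_2x_3$ and then derive (a)--(e) sequentially, using forbidden $3$-graphs from Lemmas~\ref{pointsinplaneforbiden1}--\ref{pointsinplaneforbiden2} to rule out unwanted configurations. Items (a) and (b) are essentially immediate: the lower bound $|A_i|\geq 0.26n$ and the bound $|J|\leq 0.012n$ come verbatim from parts (ii) and (iii) of Lemma~\ref{flaginductive}, while the upper bound $|A_i|\leq 0.48n$ then follows because $|A_1|+|A_2|+|A_3|\leq n$ combined with $|A_j|\geq 0.26n$ for $j\neq i$ forces $|A_i|\leq n-2\cdot 0.26n=0.48n$.

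For (c), I suppose for contradiction that $abc\in E(G)$ with $a,b\in A_1'$ and $c\in A_2'$ (other index choices are symmetric). Then $G$ already contains the five edges $\{x_1x_2x_3,\ ax_2x_3,\ bx_2x_3,\ cx_1x_3,\ abc\}$, and the relabeling $x_3\mapsto 1,\ x_2\mapsto 2,\ x_1\mapsto 3,\ a\mapsto 4,\ b\mapsto 5,\ c\mapsto 6$ identifies this $6$-vertex configuration with $L_2=\{123,124,125,136,456\}\in\mathcal{F}$, a contradiction. The argument for (d) is analogous: two link edges $vw_1w_2$ and $vu_1u_2$ with $w_1,w_2\in A_1'$ and $u_1,u_2\in A_2'$, together with the scaffolding edges $x_1x_2x_3$, $w_1x_2x_3$, $w_2x_2x_3$, $u_1x_1x_3$, $u_2x_1x_3$, produce an $8$-vertex, $7$-edge configuration which, under the mapping $x_3\mapsto 1,\ x_2\mapsto 2,\ x_1\mapsto 3,\ w_1\mapsto 4,\ w_2\mapsto 5,\ u_1\mapsto 6,\ u_2\mapsto 7,\ v\mapsto 8$, is exactly $L_7=\{123,124,125,136,137,458,678\}\in\mathcal{F}$.

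For (e) I fix $v\in V(G)\setminus\{x_1,x_2,x_3\}$ and split the edges of $L(v)$ according to where they live. The decisive inputs are: (i)~$K_4^-\in\mathcal{F}$ forces every $L_{A_i'}(v)$ to be triangle-free, since a triangle $abc$ in the link would yield a $K_4^-$ on $\{v,a,b,c\}$; Mantel's theorem then gives $|L_{A_i'}(v)|\leq |A_i|^2/4\leq 0.0576\,n^2$; (ii)~by (d), at most one index $i$ has $|L_{A_i'}(v)|>0$; (iii)~if $v\in A_k'$ then (c) forces $L_{A_j'}(v)=\emptyset$ and $L_{A_k',A_j'}(v)=\emptyset$ for every $j\neq k$, with the ``boundary'' edges involving $\{x_1,x_2,x_3\}$ contributing only $O(n)$; (iv)~link edges meeting $J\cup\{x_1,x_2,x_3\}$ number at most $(|J|+3)(n-1)\leq 0.013\,n^2+O(n)$. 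Combining these with $|L(v)|\geq n^2/8-o(n^2)$, the remaining bipartite pieces $|L_{A_j,A_l}(v)|$ (with $\{j,l\}=[3]\setminus\{k\}$ when $v\in A_k'$, and distributed arbitrarily when $v\in J$) must sum to at least $n^2/8-0.0576\,n^2-0.013\,n^2-o(n^2)>0.05\,n^2$, so pigeonhole over the three pairs yields one of them of size at least $0.001\,n^2$. The main obstacle is the numerical tightness of step~(i): the naive bound $\binom{|A_i|}{2}\approx 0.115\,n^2$ does not close against $n^2/8\approx 0.125\,n^2$, so the combination of $K_4^-\in\mathcal{F}$ with Mantel's theorem is essential for the accounting to go through.
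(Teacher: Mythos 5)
Your proposal is correct and follows essentially the same route as the paper: items (a)--(b) from Lemma~\ref{flaginductive} plus disjointness, the identical $L_2$ and $L_7$ embeddings for (c) and (d), and for (e) the same accounting of $|L(v)|$ using $|J|$, the $K_4^-$-forced triangle-freeness of the links inside the parts (Mantel), and property (d), merely phrased as a direct pigeonhole over the three bipartite pieces instead of the paper's contradiction against the $0.001n^2$ thresholds.
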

\begin{proof}
Apply Lemma~\ref{flaginductive} and get an edge $x_1x_2x_3$ with the properties from Lemma~\ref{flaginductive}. The sets $A_1,A_2,A_3$ are pairwise disjoint. 
\begin{claim}\label{pointsinplaneclaim32}
Properties (a)--(c) holds.
\end{claim}
\begin{proof}
Note that $(a)$ and $(b)$ hold by Lemma~\ref{flaginductive}. To prove $(c)$, assume that there exists $abc\in E(G)$ with $a,b\in A_i'$ and $c\in A_{j}'$ for some $i,j\in[3],i\neq j$. Let $k\in [3], k\neq i, k\neq j$. See Figure~\ref{fig:pointsinplaneclaim32} for an illustration. Now, 
\begin{align*}
x_ix_jx_k,abc,x_jx_ka,x_jx_kb,cx_ix_k\in E(G).
    \end{align*}    

\tikzset{vtx/.style={inner sep=1.7pt, outer sep=0pt, circle, fill,draw}}
\begin{figure}
\centering
\hspace{0.5cm}
\begin{minipage}[b]{0.4\textwidth}
    \begin{tikzpicture}
    \draw
    (90:1) node[vtx,label=above:$x_j$](x1){}
    (210:1) node[vtx,label=left:$x_k$](x2){}
    (-30:1) node[vtx,label=below:$x_i$](x3){}
    (x3) ++(0.5,0) node[vtx,label=below:$a$](a){}
    (x3) ++(1,0) node[vtx,label=below:$b$](b){}
    (x1) ++(0.5,0) node[vtx,label=above:$c$](c)(c){}
    ;
    \draw
    (a)--(b)--(c)--(a)
    (x1)--(x3)--(x2)--(x1)
    (x1) ellipse(1cm and 0.7cm)
    (x2) ellipse(1cm and 0.7cm)
    (a) ellipse(1cm and 0.7cm)
    ;
    \end{tikzpicture}
    \caption{Situation in Claim~\ref{pointsinplaneclaim32}.}
    \label{fig:pointsinplaneclaim32}
 \end{minipage}
\hspace{0.5cm}
\begin{minipage}[b]{0.4\textwidth}   
    \begin{tikzpicture}
    \draw
    (90:1) node[vtx,label=above:$x_j$](x1){}
    (210:1) node[vtx,label=left:$x_k$](x2){}
    (-30:1) node[vtx,label=below:$x_i$](x3){}
    (x3) ++(0.5,0) node[vtx,label=below:$w_1$](w_1){}
    (x3) ++(1,0) node[vtx,label=below:$w_2$](w_2){}
    (x1) ++(0.5,0) node[vtx,label=above:$u_1$](u_1){}
    (x1) ++(1,0) node[vtx,label=above:$u_2$](u_2){}
    (x1) ++(2,-0.5) node[vtx,label=above:$v$](v){}
    ;
    \draw
    (u_1)--(u_2)--(v)--(u_1)
    (w_1)--(w_2)--(v)--(w_1)
    (x1)--(x3)--(x2)--(x1)
    (x2) ellipse(1cm and 0.7cm)
    (w_1) ellipse(1cm and 0.7cm)
    (u_1) ellipse(1cm and 0.7cm)
    ;
    \end{tikzpicture}
    \caption{Situation in Claim~\ref{pointsinplaneclaimd}.}
    \label{fig:pointsinplaneclaim33}

\end{minipage}
\end{figure}    
    
Therefore $G$ contains a copy of $L_2$ on  $\{x_1,x_2,x_3,a,b,c\}$, a contradiction.  
\end{proof}

\begin{claim}
\label{pointsinplaneclaimd}
Property (d) holds.
\end{claim}
\begin{proof}
Towards contradiction, assume that there exists $v\in V(G)\setminus\{x_1,x_2,x_3\}, w_1,w_2\in A_i'$, $u_1,u_2\in A_j'$ for $i,j\in[3]$ with $i\neq j$ such that $vw_1w_2\in E(G)$ and $vu_1u_2\in E(G)$. Let $k\in [3], k\neq i, k\neq j$. See Figure~\ref{fig:pointsinplaneclaim33} for an illustration. Now, $\{x_1,x_2,x_3,v,u_1,u_2,w_1,w_2\}$ spans a copy of $L_7$ because 
\begin{align*}x_ix_jx_k,vw_1w_2,vu_1u_2, x_jx_kw_1, x_jx_kw_2, x_ix_ku_1, x_ix_ku_2\in E(G).
\end{align*}
However, $L_7\in \mathcal{F}$ by Lemma~\ref{pointsinplaneforbiden2}, contradicting that $G$ is $\mathcal{F}$-free.
\end{proof}

\begin{claim}
Property (e) holds.
\end{claim}
\begin{proof}
Let $v\in V(G)\setminus\{x_1,x_2,x_3\}$. Towards contradiction, assume
\begin{align*}
|L_{A_{1},A_{2}}(v)|< 0.001n^2 \quad \text{and} \quad |L_{A_{1},A_{3}}(v)|< 0.001n^2 \quad \text{and} \quad |L_{A_{2},A_{3}}(v)|< 0.001n^2.
\end{align*} 
By property $(d)$, there exists $i\in[3]$ such that $|L_{A_j'}(v)|=|L_{A_k'}(v)|=0$ for $j,k\in[3]\setminus\{i\}$ with $j\neq k$.
Note, that $|L_{A_i}(v)|\leq |A_i|^2/4$, since $L_{A_i}(v)$ is triangle-free, because otherwise there was a copy of $K_4^-$ in $G$. We have
\begin{align*}
    |L(v)|&\leq |J|\cdot n + |L_{A_{1},A_{2}}(v)| +|L_{A_{2},A_{3}}(v)| +|L_{A_{1},A_{3}}(v)| \\
    &+ |L_{A_1}(v)|+|L_{A_2}(v)|+|L_{A_3}(v)|\\
    &\leq |J| \cdot n +0.003n^2+\frac{|A_i|^2}{4}+2n
    \leq 0.012n^2+0.003n^2+ 0.06n^2+2n\\
    &< 0.08n^2(1+o(1)),
\end{align*}
contradicting the assumption $|L(v)| \geq \frac{1}{8}n^2(1+o(1))$. Note that we used $|A_i|\leq 0.48n$ and $|J|\leq 0.012n$ from properties $(a)$ and $(b)$. 
\end{proof}

This completes the proof of Lemma~\ref{pointsinplanepartition}.
\end{proof}

\begin{lemma}
\label{Fextrpartition}
  Let $n\in \mathbb{N}$ be sufficiently large and let $G$ be an $\mathcal{F}$-free $3$-graph on $n$ vertices and $|E(G)|\geq 1/24 n^3(1+o(1))$, satisfying $|L(w)|\geq\frac{1}{8}n^2(1+o(1))$ for every $w\in V(G)$. Then there exists a vertex partition $V(G)=X_1 \cup X_2 \cup X_3$ with $|X_i|\geq 0.26n$ for $i\in[3]$ such that no triple $abc$ with $a,b\in X_i$ and $c\in X_{j}$ for some $i,j\in[3]$ with $i\neq j$ forms an edge.
\end{lemma}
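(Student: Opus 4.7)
The plan is to apply Lemma~\ref{pointsinplanepartition} to obtain the edge $x_1x_2x_3$ and the sets $A_1, A_2, A_3, J$ satisfying properties (a)--(e), and then distribute each $v \in J$ into one of the three parts by using property (e) to pick (arbitrarily, in case of ties) an index $i(v) \in [3]$ with $|L_{A_j,A_k}(v)| \geq 0.001 n^2$, where $\{i(v), j, k\} = [3]$, and defining
\[
X_i := A_i \cup \{v \in J : i(v) = i\}, \quad i \in [3].
\]
Property (a) immediately gives $|X_i| \geq |A_i| \geq 0.26n$.

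The heart of the proof is checking that no ``$2$--$1$'' cross-edge survives. I would argue by contradiction: suppose $abc \in E(G)$ has $a, b \in X_i$ and $c \in X_j$ with $i \neq j$, and split into cases by the locations of $a, b, c$ in $A_1' \cup A_2' \cup A_3' \cup \{x_1, x_2, x_3\} \cup J$. If none of $a, b, c$ lies in $J$, either property (c) applies directly (when $a, b \in A_i'$ and $c \in A_j'$), or some $x_\ell$ is among $a, b, c$, in which case combining $abc$ with $x_1 x_2 x_3$ and the defining ``link'' edges around the $x_\ell$'s (e.g.\ $x_j x_k p$ for $p \in A_i'$) produces a copy of $K_4^-$ or $C_5^-$. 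For instance, an edge $x_i b c$ with $b \in A_i'$ and $c \in A_j'$ yields a copy of $C_5^-$ on $\{x_1, x_2, x_3, b, c\}$ via the edges $x_1 x_2 x_3, x_j x_k b, x_i x_k c, x_i b c$.

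When at least one of $a, b, c$ lies in $J$, say $v$, I would use the lower bound $|L_{A_j, A_k}(v)| \geq 0.001 n^2$ from (e) (where $\{i(v), j, k\} = [3]$) to pick auxiliary vertices $u_1 \in A_j'$, $u_2 \in A_k'$ with $v u_1 u_2 \in E(G)$ and $u_1, u_2$ distinct from $\{x_1, x_2, x_3, a, b, c\}$; this is possible because only $O(n)$ vertices are forbidden while the bipartite link has quadratically many edges. Combining $abc$, $v u_1 u_2$, $x_1 x_2 x_3$, and the implicit link edges $x_? x_? p$ arising from the memberships $p \in A_?'$, I would identify a copy of one of $L_7, L_8, L_9$ inside the resulting $8$- or $9$-vertex configuration, contradicting $\mathcal{F}$-freeness. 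If two or three of $a, b, c$ lie in $J$, the same strategy is repeated for each $J$-vertex, with the larger configurations matched against $L_9$ or the $12$-vertex graph $L_{10}$.

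The principal obstacle is this final step: matching each sub-case of the hypothetical bad edge to a specific forbidden subgraph from $\mathcal{F}$ and verifying the embedding. The additional forbidden graphs $L_7, \ldots, L_{10}$ introduced in Lemma~\ref{pointsinplaneforbiden2} are tailored precisely to these multi-$J$-vertex situations, and the quantitative slack $0.001 n^2$ in property (e) is what allows one to ensure that the auxiliary vertices avoid collisions with the fixed vertices of the bad edge. The overall flow mirrors the cleaning techniques used for similar hypergraph Tur\'an problems in~\cite{MR3425964, MR3667664}.
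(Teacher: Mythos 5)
Your setup (applying Lemma~\ref{pointsinplanepartition}, assigning each $v\in J$ to a part via property (e), and ruling out a bad edge by exhibiting a forbidden subgraph) is the same skeleton as the paper's, and your treatment of the cases with no $J$-vertex is fine --- the $C_5^-$ you describe for $x_ibc$ with $b\in A_i'$, $c\in A_j'$ does check out. But the step you yourself flag as the ``principal obstacle'' is a genuine gap, and as described it fails. In the mixed cases you give an auxiliary cross-edge only to the $J$-vertices and rely on the membership link edges $x_jx_kp$ for the $A'$-vertices; the resulting configurations then do \emph{not} contain any graph known to lie in $\mathcal{F}$. Concretely, take $a,b\in A_1'\setminus\{x_1\}$ and $c\in J_2$: your known edges are $x_1x_2x_3$, $abc$, $ax_2x_3$, $bx_2x_3$, $cc_1c_3$, $c_1x_2x_3$, $c_3x_1x_2$ --- seven edges on eight vertices in which only $x_2$ and $x_3$ have degree at least $3$. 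Every listed forbidden graph with at least five edges other than $C_5^+$ and $L_2$ and $L_5$ has at least three vertices of degree at least $3$ (in particular $L_7,L_8,L_9,L_{10}$ do), and a short check rules out $K_4^-$, $C_5^-$, $C_5^+$, $L_2$ and $L_5$ as well; so no contradiction is obtained. You also cannot simply declare this new $7$-edge configuration forbidden: membership in $\mathcal{F}$ requires the geometric/algebraic verification (the ``dense'' ordering plus the computer check at the equilateral shape), which was carried out only for $L_7$--$L_{10}$ and the B\'ar\'any--F\"uredi list.

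The missing idea is the paper's preliminary upgrade of property (e): combining (c) and (e), \emph{every} vertex $v\in X_i\setminus\{x_i\}$ --- including those in $A_i'$, not just those in $J_i$ --- satisfies $|L_{A_j,A_k}(v)|\geq 0.001n^2$ for the two \emph{other} indices $j,k$ (if the large bipartite link of an $A_i'$-vertex involved $A_i$, one could pick an edge avoiding the $x$'s and violate (c)). With this in hand, the paper treats $a,b,c$ uniformly: each of them that is not an $x$-vertex receives its own auxiliary edge into the two other $A$-parts, with the $0.001n^2$ slack guaranteeing distinctness, and the full configurations are then exactly $L_{10}$ (no coincidence with $x_1,x_2$), $L_9$ (one of $a,b$ equals $x_1$), $L_8$ ($c=x_2$), plus the degenerate cases $a=x_1,c=x_2$ handled by disjointness of the $A_i$. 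You should adopt this uniform treatment rather than mixing membership link edges with auxiliary edges; otherwise the case analysis lands on configurations outside the verified family.
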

\begin{proof}
Let $x_1x_2x_3\in E(G)$ be an edge with the properties from Lemma~\ref{pointsinplanepartition}. By property (e) we can partition $J=J_1 \cup J_2 \cup J_3$ such that for every $v\in J_i$
we have $|L_{A_j,A_k}(v)|\geq 0.001n^2$, where $j,k\in[3], j\neq k, j\neq i, k\neq i$.
Set $X_i:=A_i\cup J_i$. Note that by properties (c) and (e) for every $v\in X_i\setminus\{x_i\}$ we have $|L_{A_j,A_k}(v)|\geq 0.001n^2$, where $j,k\in[3], j\neq k, j\neq i, k\neq i$. 
Further, by property (a) and definition of $X_i$ we have $|X_i|\geq 0.26n$ for $n$ large enough.

Towards contradiction, assume that there exists $a,b\in X_1$ and $c\in X_2$ with $abc\in E(G)$.
For each $a,b,c$ we find their neighbors in $A_1\cup A_2\cup A_3$ that put them to $J_1$ and $J_2$. These neighbors are in $A_1\cup A_2\cup A_3$ because they were adjacent to some of $x_1,x_2,x_3$. This will eventually form one of the forbidden subgraphs.
We will distinguish cases depending on how $a,b,c$ coincide with $x_1,x_2,x_3$.

\tikzset{vtx/.style={inner sep=1.7pt, outer sep=0pt, circle, fill,draw}}
\begin{figure}
\centering
\hspace{0.5cm}
\begin{minipage}[b]{0.4\textwidth}
    \begin{tikzpicture}
    \draw
    (90:1) node[vtx,label=above:$x_2$](x1){}
    (210:1) node[vtx,label=below:$x_3$](x2){}
    (-30:1) node[vtx,label=below:$x_1$](x3){}
    (x3) ++(1.0,0) node[vtx,label=below:$a$](a){}
    (x3) ++(1.5,0) node[vtx,label=below:$b$](b){}
    (x3) ++(0.5,0) node[vtx,label=below:$c_1$](c_1){}
    (x1) ++(0.5,0) node[vtx,label=above:$c$](c)(c){}
    (x2) ++(-0.5,0) node[vtx,label=below:$a_3$](a_3){}
    (x2) ++(-1,0) node[vtx,label=below:$b_3$](b_3){}
    (x2) ++(-1.4,0) node[vtx,label=below:$c_3$](c_3){}
    (x1) ++(-0.42,0) node[vtx,label=above:$a_2$](a_2){}
    (x1) ++(-0.84,0) node[vtx,label=above:$b_2$](b_2){}    
    ;
    \draw
    (a)--(b)--(c)--(a)
    (x1)--(x2)--(x3)--(x1)
    (x1) ellipse(1.3cm and 0.8cm)
    (a_3) ellipse(1.3cm and 0.8cm)
    (a) ellipse(1.3cm and 0.8cm)
    ;
        
    \end{tikzpicture}
    \caption{Case 1.}
    \label{fig:pointsinplanecase1}
\end{minipage}
\hspace{0.5cm}
\begin{minipage}[b]{0.4\textwidth}
     \begin{tikzpicture}
    \draw
    (90:1) node[vtx,label=above:$x_2$](x1){}
    (210:1) node[vtx,label=below:$x_3$](x2){}
    (-30:1) node[vtx,label=below:$x_1$](x3){}
    (x3) ++(1,0) node[vtx,label=below:$c_1$](c_1){}
    (x3) ++(0.5,0) node[vtx,label=below:$b$](b){}
    (x1) ++(0.5,0) node[vtx,label=above:$c$](c)(c){}
    (x2) ++(-0.5,0) node[vtx,label=below:$b_3$](b_3){}
    (x2) ++(-1,0) node[vtx,label=below:$c_3$](c_3){}
    (x1) ++(-0.5,0) node[vtx,label=above:$b_2$](b_2){}    
    ;
    \draw
    (x3)--(b)--(c)--(x3)
    (x1)--(x2)--(x3)--(x1)
    (x1) ellipse(1.3cm and 0.8cm)
    (a_3) ellipse(1.3cm and 0.8cm)
    (b) ellipse(1.3cm and 0.8cm)
    ; 
    \end{tikzpicture}
    \caption{Case 2.}
    \label{fig:pointsinplanecase2}
\end{minipage}

\begin{minipage}[b]{0.4\textwidth}
     \begin{tikzpicture}
    \draw
    (90:1) node[vtx,label=above:$x_2$](x1){}
    (210:1) node[vtx,label=below:$x_3$](x2){}
    (-30:1) node[vtx,label=below:$x_1$](x3){}
    (x3) ++(0.5,0) node[vtx,label=below:$a$](a){}
    (x3) ++(1,0) node[vtx,label=below:$b$](b){}
    (x2) ++(-0.5,0) node[vtx,label=below:$a_3$](a_3){}
    (x2) ++(-1,0) node[vtx,label=below:$b_3$](b_3){}
    (x1) ++(-0.5,0) node[vtx,label=above:$a_2$](a_2){}
    (x1) ++(-1,0) node[vtx,label=above:$b_2$](b_2){}    
    ;
    \draw
    (a)--(b)--(x1)--(a)
    (x1)--(x2)--(x3)--(x1)
    (a_2) ellipse(1.2cm and 0.7cm)
    (a_3) ellipse(1.2cm and 0.7cm)
    (a) ellipse(1.2cm and 0.7cm)
    ;
      
    \end{tikzpicture}
    \caption{Case 4.}
    \label{fig:pointsinplanecase4}
\end{minipage}
\end{figure}   

\noindent
\textbf{Case 1:} $a,b\neq x_1$ and $c\neq x_2$.

\noindent
Since
\begin{align*}
    |L_{A_2,A_3}(a)|\geq 0.001n^2, \quad |L_{A_2,A_3}(b)|\geq 0.001n^2 \quad \text{and} \quad |L_{A_a,A_3}(c)|\geq 0.001n^2,
\end{align*}
there exists distinct vertices $a_3,b_3,c_3\in A_3, a_2,b_2\in A_2\setminus\{c\}$ and $c_1\in A_1\setminus\{a,b\}$ such that $aa_2a_3,bb_2b_3,cc_1c_3\in E(G)$. See Figure~\ref{fig:pointsinplanecase1} for an illustration. We have
\begin{align*}
    x_1x_2x_3, abc, aa_2a_3,bb_2b_3,cc_1c_3, c_1x_2x_3,a_2x_1x_3,b_2x_1x_3, c_3x_1x_2,b_3x_1x_2,a_3x_1x_2\in E(G),
\end{align*}
and therefore the vertices $\{x_1,x_2,x_3,a,b,c,c_1,a_2,b_2,a_3,b_3,c_3\}$ span a copy of $L_{10}$, a contradiction. 

\noindent
\textbf{Case 2:} $a=x_1$ and $c\neq x_2$.

\noindent
By property $(d)$, there exists distinct vertices $b_3,c_3\in A_3, b_2\in A_2\setminus\{c\}$ and $c_1\in A_1\setminus\{a,b\}$ such that $bb_2b_3,cc_1c_3\in E(G)$. See Figure~\ref{fig:pointsinplanecase2} for an illustration. We have
\begin{align*}
    x_1x_2x_3,x_1bc,bb_2b_3,cc_1c_3, c_1x_2x_3,b_2x_1x_3, c_3x_1x_2,b_3x_1x_2\in E(G),
\end{align*}
and therefore the vertices $\{x_1,x_2,x_3,b,c,c_1,b_2,b_3,c_3\}$ span a copy of $L_{9}$, a contradiction. 

\noindent
\textbf{Case 3:} $b=x_1$ and $c\neq x_2$.

\noindent
This case is similar to Case 2. 

\noindent
\textbf{Case 4:} $a,b\neq x_1$ and $c=x_2$.

\noindent
By property $(d)$, there exists distinct vertices $a_3,b_3\in A_3, a_2,b_2\in A_2\setminus \{c\}$ such that $aa_2a_3,bb_2b_3\in E(G)$. See Figure~\ref{fig:pointsinplanecase4} for an illustration. We have
\begin{align*}
    x_1x_2x_3, abx_2,aa_2a_3,bb_2b_3,a_2x_1x_3,b_2x_1x_3,b_3x_1x_2,a_3x_1x_2\in E(G),
\end{align*}
and therefore the vertices $\{x_1,x_2,x_3,a,b,a_2,b_2,a_3,b_3\}$ span a copy of $L_{8}$, a contradiction. 

\noindent
\textbf{Case 5:} $a=x_1$ and $c=x_2$.

\noindent
This means that $b\in N(x_1,x_2)=A_3$, contradicting $b\in X_1$. 

\noindent
\textbf{Case 6:} $b=x_1$ and $c=x_2$.

\noindent
This case is similar to case 5.\\

We conclude that for $a,b\in X_1,c\in X_3$, we have $abc\not\in E(G)$. Similarly, for $a,b\in X_i,c\in X_j$ with $i\neq j$, we have $abc\not\in E(G)$.

\end{proof}

\subsection{The asymptotic result}
In this subsection we will prove Theorem~\ref{pointinplaneturanmainrekursion}. We first observe that an extremal $\mathcal{F}$-free $3$-graph satisfies a minimum degree condition.
\begin{lemma}
\label{pointsinplaneaddvertex}
Let $G$ be an $\mathcal{F}$-free $3$-graph and $v\in V(G)$. Denote $G_{u,v}$ the $3$-graph constructed from $G$ by adding a copy $w$ of $v$ and deleting $u$, i.e. 
\begin{align*}
    V(G_{u,v})=V(G)\cup \{w\} \setminus\{u\}, \quad E(G_{u,v})=E(G[V(G)\setminus \{u\}]) \cup \{wab \ | \ abv\in E(G) \}.
\end{align*}
Then, $G_{u,v}$ is also $\mathcal{F}$-free.
\end{lemma}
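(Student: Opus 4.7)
I would argue by contradiction: assume $G_{u,v}$ contains a copy of some $H\in\mathcal{F}$ via an embedding $f\colon V(H)\to V(G_{u,v})$, and derive a forbidden subgraph in $G$.

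The key observation is that in $G_{u,v}$ the new vertex $w$ is a non-adjacent twin of $v$. Indeed, by construction every edge of $G_{u,v}$ containing $w$ has the form $\{w,a,b\}$ with $\{v,a,b\}\in E(G)$, forcing $a,b\neq v$; hence no edge of $G_{u,v}$ contains both $w$ and $v$, and $L_{G_{u,v}}(w)=L_{G_{u,v}}(v)$. Consequently, the map $\phi\colon V(G_{u,v})\to V(G)$ defined by $\phi(w)=v$ and $\phi(z)=z$ otherwise sends every edge of $G_{u,v}$ to an edge of $G$ (with three distinct vertices): edges not through $w$ lie in $G[V(G)\setminus\{u\}]\subseteq G$, while an edge $\{w,a,b\}$ maps to $\{v,a,b\}\in E(G)$.

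If $\phi\circ f$ is injective, then it already embeds $H$ into $G$, contradicting $\mathcal{F}$-freeness. Otherwise the only possible collision is $\phi(w)=\phi(v)=v$, so both $w,v\in\mathrm{Im}(f)$; let $x:=f^{-1}(w)$ and $y:=f^{-1}(v)$. Because $\{w,v\}$ lies in no edge of $G_{u,v}$, the pair $\{x,y\}$ lies in no edge of $H$, and $\phi\circ f$ then realises the hypergraph $H/\{x,y\}$ obtained from $H$ by identifying $x$ and $y$ as a subhypergraph of $G$.

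The step I expect to be the main obstacle is showing that $H/\{x,y\}$ itself always contains an element of $\mathcal{F}$; granting this, the embedding of $H/\{x,y\}$ exhibits a forbidden subgraph in $G$, the desired contradiction. I would carry this out by a short case analysis across the forbidden family $\mathcal{F}$ of Lemmas~\ref{pointsinplaneforbiden1}--\ref{pointsinplaneforbiden2}, enumerating non-adjacent pairs of each listed~$H$. For instance, contracting the unique non-adjacent pair $\{3,4\}$ of $C_5^-$ yields exactly $K_4^-$, and contracting the pair $\{4,6\}$ of $L_7$ produces a $K_4^-$ on $\{1,2,3,m\}$ inside the merged graph; analogous explicit embeddings of $K_4^-$ (or occasionally of another small member of $\mathcal{F}$ such as $C_5^-$) dispose of $C_5^+$, $L_2,\ldots,L_{10}$, and $P_7^-$. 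With this verification in hand, $G_{u,v}$ must be $\mathcal{F}$-free.
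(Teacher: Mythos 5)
Your reduction is set up correctly: $w$ is a non-adjacent twin of $v$ in $G_{u,v}$, the projection $\phi$ maps edges to edges, and the only nontrivial case is when the copy of $H$ uses both $v$ and $w$, in which case $G$ contains the contraction $H/\{x,y\}$ of a non-covered pair. But the step you yourself flag as the main obstacle is a genuine gap, and it cannot be closed the way you propose. The family $\mathcal{F}$ is \emph{not} the explicit list of Lemmas~\ref{pointsinplaneforbiden1} and~\ref{pointsinplaneforbiden2}: by definition it consists of \emph{all} $3$-graphs on at most $12$ vertices that are forbidden in the geometric sense, and the paper states explicitly that the displayed list is incomplete. The lemma is applied to hypergraphs that are free of the whole family (e.g.\ extremal $\mathcal{F}$-free $3$-graphs in Lemma~\ref{pointsplanemindegree}), so a case analysis over the listed members $K_4^-,C_5^-,\dots,L_{10},P_7^-$ proves nothing about an arbitrary $H\in\mathcal{F}$; there is no enumeration available over which to run your check, and no purely combinatorial handle on what the unlisted members look like.

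The paper instead exploits the geometric \emph{definition} of $\mathcal{F}$, which is the idea missing from your plan. Since the contracted hypergraph (the $\phi$-image of the copy) is a subgraph of the $\mathcal{F}$-free graph $G$, it is itself $\mathcal{F}$-free and in particular not a member of $\mathcal{F}$; as it has at most $11\leq 12$ vertices, non-membership means there is a positive-measure set of triangle shapes $T$ such that for every $\varepsilon>0$ some point set $P$ realizes it inside $G(P,T,\varepsilon)$. Adding to $P$ a new point sufficiently close to the point representing the merged vertex creates a second point whose link contains that of the original and which forms no edge with it, so the enlarged point set realizes $H$ (the pair $\{x,y\}$ is non-covered, so no edge through both twins is needed). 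This exhibits copies of $H$ for a positive-measure set of shapes and all $\varepsilon$, contradicting $H\in\mathcal{F}$. Note that this argument in fact proves the statement you wanted to verify by hand --- the contraction of a non-covered pair of a forbidden graph is again forbidden --- but only via this realizability/point-duplication argument; your sample contractions (e.g.\ $C_5^-$ and $L_7$ yielding $K_4^-$) are correct yet cannot be extended to a complete proof by finite inspection.
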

\begin{proof}
Towards contradiction assume that $G_{u,v}$ does contain a copy of some $F\in\mathcal{F}$. Since $G$ is $\mathcal{F}$-free, this copy $F'$ of $F$ contains the vertices $v$ and $w$. $F'-w$ is a subgraph of $G$ and thus $\mathcal{F}$-free, in particular $F' - w \notin \mathcal{F}$. Thus, there exists a set of triangles shape $\mathcal{T}$ of positive measure such that for $T\in \mathcal{T}$ and $\varepsilon>0$ there exists a point set $P=P(T,\varepsilon)\subseteq \mathbb{R}^2$ with $F' - w$ being isomorphic to $G(P,T,\varepsilon)$. Construct a new point set $P'$ from $P(T,\varepsilon)$ by adding a new point $p_w$ close enough to the point corresponding to $v$. This guarantees that $v$ and $p_w$ have the same linkgraph in $G(P',T,\varepsilon)$ and that there is no edge in $G(P',T,\varepsilon)$ containing both $p_w$ and $v$. Now, $G(P',T,\varepsilon)$ contains a copy of $F$, contradicting that $F\in\mathcal{F}$.

\end{proof}
\begin{lemma}
\label{pointsplanemindegree}
Let $G$ be an extremal $\mathcal{F}$-free $3$-graph on $n$ vertices. Then for every $w\in V(G)$, we have $|L(w)|\geq \frac{1}{8} n^2(1+o(1))$. 
\end{lemma}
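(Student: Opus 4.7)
The plan is to combine the substitution operation from Lemma~\ref{pointsinplaneaddvertex} with a simple averaging argument. First, since $G$ is extremal and $S(n)$ is $\mathcal{F}$-free (being the conjectured extremal construction with $|E(S(n))| \geq \frac{1}{24}n^3 - O(n\log n)$), we have $|E(G)| \geq \frac{1}{24}n^3(1+o(1))$. Because $\sum_{w \in V(G)} |L(w)| = 3|E(G)|$, averaging yields some vertex $v \in V(G)$ with $|L(v)| \geq \frac{3|E(G)|}{n} \geq \frac{1}{8}n^2(1+o(1))$.

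Second, suppose for contradiction that some $u \in V(G)$ satisfies $|L(u)| < \frac{1}{8}n^2(1+o(1))$; concretely, fix $\delta > 0$ and $n$ with $|L(u)| \leq \bigl(\tfrac{1}{8} - \delta\bigr)n^2$, while $|L(v)| \geq \bigl(\tfrac{1}{8} - o(1)\bigr)n^2$. Note $u \neq v$ automatically. I would then apply Lemma~\ref{pointsinplaneaddvertex} to form the $\mathcal{F}$-free $3$-graph $G_{u,v}$ on $n$ vertices, and carefully count its edges. Edges of $G$ not containing $u$ survive, the $|L(u)|$ edges containing $u$ are deleted, and for each pair $\{a,b\}$ with $abv \in E(G)$ and $u \notin \{a,b\}$ we gain the new edge $wab$; this contributes exactly $|L(v)| - |N(u,v)|$ edges. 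Therefore
\begin{equation*}
|E(G_{u,v})| \;=\; |E(G)| - |L(u)| + |L(v)| - |N(u,v)|.
\end{equation*}

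Third, I would bound $|N(u,v)| \leq n$ and combine the estimates to get
\begin{equation*}
|E(G_{u,v})| - |E(G)| \;\geq\; \bigl(\tfrac{1}{8} - o(1)\bigr)n^2 - n - \bigl(\tfrac{1}{8} - \delta\bigr)n^2 \;=\; \delta n^2 - o(n^2) \;>\; 0
\end{equation*}
for $n$ sufficiently large. Since $G_{u,v}$ is $\mathcal{F}$-free on $n$ vertices, this contradicts the extremality of $G$, completing the proof.

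There is no genuine obstacle here: the argument is essentially Zykov symmetrization adapted to $3$-graphs. The only subtlety to verify is the edge-count formula for $G_{u,v}$ (in particular that the only edges we ``lose'' relative to a naive count are the $|N(u,v)|$ edges of $G$ through both $u$ and $v$, which cannot be cloned into $w$), and that the linear correction $|N(u,v)| \leq n$ is dominated by the quadratic gap $\delta n^2$.
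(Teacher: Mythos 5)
Your proposal is correct and follows essentially the same route as the paper: use the clone-and-delete operation $G_{u,v}$ from Lemma~\ref{pointsinplaneaddvertex} with $v$ a (near-)maximum-degree vertex found by averaging against $|E(G)|\geq |E(S(n))|\geq \frac{1}{24}n^3(1+o(1))$, bound the codegree correction $|N(u,v)|\leq n$, and contradict extremality. The only differences are cosmetic (you state the edge count as an exact identity and phrase the low-degree assumption via a fixed $\delta$ instead of the paper's explicit $n^{3/2}$ error term).
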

\begin{proof}
Assume that there exists $u\in V(G)$ with $|L(u)|<\frac{1}{8} n^2-n^{3/2}$ for $n$ sufficiently large. Let $v\in V(G)$ be a vertex maximizing $|L(v)|$. The $3$-graph $G_{u,v}$ is $\mathcal{F}$-free by Lemma~\ref{pointsinplaneaddvertex} and has more edges than $G$:
\begin{align*}
    &|E(G_{u,v})|-|E(G)|\geq -|L(u)|+|L(v)|-d(v,u)\geq -\frac{1}{8} n^2+n^{3/2}+\frac{3|E(G)|}{n}-n \\
    \geq& -\frac{1}{8} n^2+n^{3/2}+\frac{3|E(S(n))|}{n}-n \geq -\frac{1}{8} n^2+n^{3/2}+\frac{1}{8}n^3-O(n\log n) >0,
\end{align*}
for $n$ sufficiently large. This contradicts the extremality of $G$. Thus for every $w\in V(G)$, we have $|L(w)|\geq \frac{1}{8} n^2-n^{3/2}= \frac{1}{8} n^2(1+o(1))$. 
\end{proof}

\begin{proof}[Proof of Theorem~\ref{pointinplaneturanmainrekursion}]
For the lower bound, we have
\begin{align*}
    \textup{ex}(n,\mathcal{F})\geq e(S(n))=\frac{1}{24}n^3 (1+o(1)).
\end{align*}
For the upper bound, let $n_0$ be large enough such that the following reasoning holds. For $n\geq n_0$, $\textup{ex}(n,\mathcal{F})\leq 0.251 \binom{n}{3}$ by \eqref{pointsinplaneturanupper}. We will prove by induction on $n$ that $\textup{ex}(n,\mathcal{F})\leq \frac{1}{24}n^3+n \cdot n_0^2$. This trivially holds for $n\leq n_0$, because
\begin{align*}
    \textup{ex}(n,\mathcal{F})\leq \binom{n}{3}\leq \frac{1}{24}n^3+n \cdot n_0^2.
\end{align*}
For $n_0\leq n\leq 4n_0$, we have 
\begin{align*}
     \textup{ex}(n,\mathcal{F})\leq 0.251\binom{n}{3}\leq \frac{1}{24}n^3+0.001 \frac{n^3}{6}\leq \frac{1}{24}n^3+n \cdot n_0^2.
\end{align*}
Now, let $G$ be an extremal $\mathcal{F}$-free $3$-graph on $n\geq 4n_0$ vertices. By Lemma~\ref{pointsplanemindegree} we have $|L(w)|\geq \frac{1}{8}n^2(1+o(1))$ for every $w\in V(G)$. Therefore, the assumptions for Lemma~\ref{Fextrpartition} hold. Take a vertex partition $V(G)=X_1\cup X_2 \cup X_2$ with the properties from Lemma~\ref{Fextrpartition}. Now, for all $i\in[3]$, $|X_i|\geq 0.26 n\geq n_0$ and since $G[X_i]$ is $\mathcal{F}$-free, we have 
\begin{align*}
    e(G[X_i])\leq \frac{1}{24}|X_i|^3+|X_i| \cdot n_0^2
\end{align*}
by the induction assumption. We conclude
\begin{align*}
    e(G)&\leq |X_1||X_2||X_3|+ \sum_{i=1}^3 e(G[X_i])\leq |X_1||X_2||X_3|+ n\cdot n_0^2+ \frac{1}{24}\sum_{i=1}^3 |X_i|^3\\
    &\leq \frac{1}{24}n^3+n \cdot n_0^2,
\end{align*}
where in the last step we used that the function $g(x_1,x_2,x_3):=x_1x_2x_3+1/24 (x_1^3+x_2^3+x_3^3)$ with domain $\{(x_1,x_2,x_3) \in [0.26,0.48]^3 : x_1+x_2+x_3=1\}$ archives its maximum at $x_1=x_2=x_3=1/3$. This can be verified quickly using basic calculus or simply by using a computer, we omit the details.  
\end{proof}
Analyzing the previous proof actually gives a stability result. 
\begin{lemma}
\label{pointsinplanepartition2}
Let $G$ be an $\mathcal{F}$-free $3$-graph on $n$ vertices and $|E(G)|= 1/24 n^3(1+o(1))$, satisfying $|L(w)|\geq \frac{1}{8}n^2(1+o(1))$ for every $w\in V(G)$.
  Then $G$ has a vertex partition $V(G)=X_1\cup X_2 \cup X_2$ such that 
  \begin{itemize}
      \item $|X_i|=\frac{n}{3} (1+o(1))$ for every $i\in[3]$,
      \item there is no edge $e=xyz$ with $x,y\in X_i$ and $z\notin X_i$ for $i\in[3]$.
  \end{itemize}
\end{lemma}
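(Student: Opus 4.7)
The plan is to essentially revisit the proof of Theorem~\ref{pointinplaneturanmainrekursion} while tracking the sizes of the three parts. First I would apply Lemma~\ref{Fextrpartition} directly: $G$ satisfies its hypotheses, so there exists a vertex partition $V(G)=X_1\cup X_2\cup X_3$ with $|X_i|\geq 0.26n$ for each $i\in[3]$ such that no triple $abc$ with $a,b\in X_i$ and $c\in X_j$ for $i\neq j$ forms an edge. The second bullet point of the conclusion is delivered immediately, and from $\sum_i|X_i|=n$ we additionally obtain $|X_i|\leq 0.48n$.

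It remains to show $|X_i|=\tfrac{n}{3}(1+o(1))$. Since every edge of $G$ is either contained in some $X_i$ or has exactly one vertex in each of the three parts, we get
\begin{equation*}
e(G)\ \leq\ |X_1||X_2||X_3|+\sum_{i=1}^{3}e(G[X_i]).
\end{equation*}
Each $G[X_i]$ is $\mathcal{F}$-free, hence by Theorem~\ref{pointinplaneturanmainrekursion}, $e(G[X_i])\leq \tfrac{1}{24}|X_i|^3(1+o(1))$. Setting $a_i:=|X_i|/n$ and dividing by $n^3$, the hypothesis $e(G)=\tfrac{1}{24}n^3(1+o(1))$ yields
\begin{equation*}
\tfrac{1}{24}\ \leq\ a_1a_2a_3+\tfrac{1}{24}\bigl(a_1^3+a_2^3+a_3^3\bigr)+o(1),
\end{equation*}
subject to $a_1+a_2+a_3=1$ and $a_i\in[0.26,0.48]$.

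Finally, as already used at the end of the proof of Theorem~\ref{pointinplaneturanmainrekursion}, the function $g(a_1,a_2,a_3):=a_1a_2a_3+\tfrac{1}{24}(a_1^3+a_2^3+a_3^3)$ attains its unique maximum on the compact admissible domain $\{(a_1,a_2,a_3)\in[0.26,0.48]^3:a_1+a_2+a_3=1\}$ at $a_1=a_2=a_3=\tfrac{1}{3}$, with value exactly $\tfrac{1}{24}$. A direct Hessian computation along the constraint surface shows this maximum is quadratically strict, i.e.~there is $c>0$ with $g(a)\leq\tfrac{1}{24}-c\sum_i\bigl(a_i-\tfrac{1}{3}\bigr)^2$ on this domain. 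Combining this with the previous display forces $\bigl(a_i-\tfrac{1}{3}\bigr)^2=o(1)$ for each $i$, giving $|X_i|=\tfrac{n}{3}(1+o(1))$, as desired. The only slightly nontrivial step is this quantitative strictness of $g$'s maximum, but it follows either from the Hessian calculation sketched above or, even more cheaply, from a compactness argument using continuity of $g$ and uniqueness of its maximizer on the compact domain.
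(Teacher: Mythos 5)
Your proposal is correct and follows essentially the same route as the paper's proof: apply Lemma~\ref{Fextrpartition}, bound $e(G)\leq |X_1||X_2||X_3|+\sum_i e(G[X_i])$ with $e(G[X_i])\leq \tfrac{1}{24}|X_i|^3(1+o(1))$ via Theorem~\ref{pointinplaneturanmainrekursion}, and conclude from the unique maximum of $g$ at $(\tfrac13,\tfrac13,\tfrac13)$ on the compact constrained domain. Your explicit quantitative strictness (Hessian/compactness) step merely fills in a detail the paper leaves implicit.
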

\begin{proof}
Take a vertex partition $V(G)=X_1\cup X_2 \cup X_2$ from Lemma~\ref{Fextrpartition}. Since $G[X_i]$ is $\mathcal{F}$-free, we have by Theorem~\ref{pointinplaneturanmainrekursion} that $e(G[X_i])\leq \frac{1}{24}|X_i|^3 (1+o(1))$. Now, again
\begin{align*}
    \frac{1}{24}n^3(1+o(1))&=e(G)\leq |X_1||X_2||X_3|+ \sum_{i=1}^3 e(G[X_i])\\
    &\leq |X_1||X_2||X_3|+  \frac{1}{24}\sum_{i=1}^3 |X_i|^3(1+o(1).
\end{align*}
Again, since the polynomial $g$ with domain $\{(x_1,x_2,x_3) \in [0.26,0.48]^3 : x_1+x_2+x_3=1\}$ achieves its unique maximum at $x_1=x_2=x_3=1/3$, we get $|X_i|= (1/3+o(1))n$.
\end{proof}

\subsection{The exact result}

\begin{lemma}
\label{pointsinplanepointswap}
Let $T\in \mathcal{T}_\mathcal{F}$ and $P\subseteq \mathbb{R}^2$ be a point set. Denote $G=G(P,T,\varepsilon(T))$. For every $u,v\in V(G)$ there exists a point set $P'$ such that $G_{u,v}=G(P',T,\varepsilon(T))$.
\end{lemma}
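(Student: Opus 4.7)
The plan is to construct $P'$ by removing the point $p_u$ corresponding to $u$ from $P$ and inserting a fresh point $p_w$ very close to the point $p_v$ corresponding to $v$, mirroring the construction used inside the proof of Lemma~\ref{pointsinplaneaddvertex}. (If $u=v$ there is nothing to do beyond taking any close duplicate of $p_v$; assume $u\neq v$.) Identifying $a\in V(G)\setminus\{u\}$ with $p_a\in P'$ and $w$ with $p_w$, triples in $P'$ avoiding $p_w$ are exactly triples in $P$ avoiding $p_u$, so their $\varepsilon(T)$-similarity status reproduces the edge set $E(G[V(G)\setminus\{u\}])$ required as the first piece of $E(G_{u,v})$. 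It remains to choose $p_w$ close to $p_v$ so that (i) for each pair $\{a,b\}\subseteq V(G)\setminus\{u,v\}$, the triangle $p_wp_ap_b$ is $\varepsilon(T)$-similar to $T$ iff $p_vp_ap_b$ is, and (ii) no triangle $p_wp_vp_a$ is $\varepsilon(T)$-similar to $T$.

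For (ii), I observe that as $p_w\to p_v$, the triangle $p_wp_vp_a$ degenerates and its angle at $p_a$ tends to $0$. Since replacing $\varepsilon(T)$ by any smaller positive value preserves the $\mathcal{F}$-freeness condition defining $\mathcal{T}_\mathcal{F}$, I may assume without loss of generality that $\varepsilon(T)$ is strictly less than the smallest angle $\alpha_T$ of $T$. Then for $p_w$ sufficiently close to $p_v$, uniformly over the finitely many choices of $a$, the smallest angle of $p_wp_vp_a$ differs from $\alpha_T$ by more than $\varepsilon(T)$, so no such triangle is $\varepsilon(T)$-similar to $T$.

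For (i), the sorted-angle map $q\mapsto(\alpha(q),\beta(q),\gamma(q))$ of the triangle $qp_ap_b$ is continuous in $q\in\mathbb{R}^2\setminus\{p_a,p_b\}$, so the set $U_{a,b}\subseteq\mathbb{R}^2$ of $q$ for which $qp_ap_b$ is $\varepsilon(T)$-similar to $T$ is open. If $p_v$ lies in $U_{a,b}$, or in the interior of its complement, any sufficiently close $p_w$ inherits the same similarity status. The delicate subcase is when some sorted angle of $p_vp_ap_b$ equals one of the six thresholds $\alpha_T\pm\varepsilon(T)$, $\beta_T\pm\varepsilon(T)$, $\gamma_T\pm\varepsilon(T)$; then $p_vp_ap_b$ is not $\varepsilon(T)$-similar (strict inequality) and I need $p_w$ on the non-similar side of the locally smooth curve $\partial U_{a,b}$, which is a half-neighborhood of positive two-dimensional Lebesgue measure at $p_v$. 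Intersecting the finitely many such constraints yields a region of positive measure near $p_v$ from which $p_w$ may be selected.

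The main obstacle I foresee is the boundary subcase of (i): if $p_v$ happens to lie simultaneously on several curves $\partial U_{a,b}$ whose half-neighborhoods point in conflicting directions, one must verify their intersection is still nonempty. In the generic case the boundaries meet transversely and the intersection is a convex cone of positive measure; in degenerate arrangements one can instead use that, for this fixed $P$, the value $\varepsilon(T)$ may be perturbed by a sufficiently small amount without altering $G(P,T,\varepsilon(T))$ (since only finitely many triples have threshold value equal to $\varepsilon(T)$), and after such a perturbation no boundary case arises, reducing (i) to mere openness of $U_{a,b}$. Taking $p_w$ close enough to $p_v$ and inside the good regions of both (i) and (ii), the point set $P':=(P\setminus\{p_u\})\cup\{p_w\}$ satisfies $G(P',T,\varepsilon(T))=G_{u,v}$ under the chosen vertex identification.
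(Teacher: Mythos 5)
Your construction coincides with the paper's own proof, which consists of exactly this and nothing more: delete the point representing $u$ and add a new point close enough to the point representing $v$; all of the ``close enough'' analysis you carry out is left implicit there. Your treatment of (ii) and of the generic case of (i) is fine. The step to be careful about is your handling of the boundary cases, because both devices you invoke --- shrinking $\varepsilon(T)$ below the smallest angle of $T$, and perturbing $\varepsilon(T)$ to a generic value $\varepsilon'$ --- change the constant for which the lemma is being proved. In particular, arranging that $p_w$ and $p_v$ have the same $\varepsilon'$-similarity status for some $\varepsilon'<\varepsilon(T)$ yields $G(P',T,\varepsilon')=G_{u,v}$, not $G(P',T,\varepsilon(T))=G_{u,v}$: a triangle at $p_w$ can be $\varepsilon(T)$-similar without being $\varepsilon'$-similar, and this happens precisely in the threshold situations you are trying to dispose of, so the perturbation relabels the boundary problem rather than removing it. This is harmless in context, since $\varepsilon(T)$ is only ``some'' admissible constant: you may refine its choice once and for all (e.g.\ smaller than the smallest angle of $T$), and the downstream uses of the lemma (Lemmas~\ref{pointsplanemindegree2} and~\ref{pointsinplanepartition3}) only require $e(G(P',T,\varepsilon(T)))\ge e(G_{u,v})$, which does follow from $G(P',T,\varepsilon')=G_{u,v}$ with $\varepsilon'\le\varepsilon(T)$ by monotonicity of the edge set in $\varepsilon$. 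But such modifications should be made globally, as part of fixing $\varepsilon(T)$, not per point set $P$ inside the proof of the lemma as stated. With that caveat, your write-up is if anything more careful than the paper's, which asserts the construction works and ignores the boundary cases altogether.
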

\begin{proof}
Let $u,v\in V(G)$. Construct $P'$ from $P$ by removing the point corresponding to $u$ and adding a point close enough to the point corresponding to $v$. This point set satisfies $G_{u,v}=G(P',T,\varepsilon(T))$.
\end{proof}

\begin{lemma}
\label{pointsplanemindegree2}
Let $T\in \mathcal{T}_\mathcal{F}$ be a triangle shape and let $P\subseteq \mathbb{R}^2$ be an $n$-element point set maximizing the number of triangles being $\varepsilon(T)$-similar to $T$. Denote $G=G(P,T,\varepsilon(T))$. Then for every $w\in V(G)$, we have $|L(w)|\geq \frac{1}{8} n^2(1+o(1))$. 
\end{lemma}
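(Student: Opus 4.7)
The plan is to mimic the proof of Lemma~\ref{pointsplanemindegree} almost verbatim, replacing the abstract swap Lemma~\ref{pointsinplaneaddvertex} by the geometric swap Lemma~\ref{pointsinplanepointswap}. The extremality of $P$ plays the role that extremality of $G$ played in the earlier argument, and the lower bound construction $S(n)$ still supplies the required edge count because it can be realized by a planar point set (this is the Bár\'any--Füredi construction sketched in Figure~\ref{fig:iterated construction}).

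In detail, suppose toward a contradiction that there exists a vertex $u\in V(G)$ with $|L(u)|<\tfrac{1}{8}n^2-n^{3/2}$ for $n$ sufficiently large. Choose $v\in V(G)$ maximizing $|L(v)|$; by averaging,
\begin{align*}
|L(v)|\ \geq\ \frac{3\,e(G)}{n}\ \geq\ \frac{3\,|E(S(n))|}{n}\ \geq\ \frac{1}{8}n^2-O(\log n),
\end{align*}
where the second inequality uses that $P$ is extremal and the construction $S(n)$ is realizable geometrically, and the third uses the estimate $|E(S(n))|\geq \tfrac{1}{24}n^3-O(n\log n)$ recorded in Section~\ref{pointsinplanepreperation}. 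By Lemma~\ref{pointsinplanepointswap}, there is a point set $P'\subseteq\mathbb{R}^2$ with $G(P',T,\varepsilon(T))=G_{u,v}$, so $P'$ is an admissible competitor in the maximization problem. Its edge count satisfies
\begin{align*}
|E(G_{u,v})|-|E(G)|\ \geq\ -|L(u)|+|L(v)|-d(u,v)\ \geq\ n^{3/2}-O(\log n)-n\ >\ 0
\end{align*}
for $n$ large, where $d(u,v)\leq n$ accounts for edges containing both $u$ and $v$ that are double-counted. This contradicts the extremality of $P$, and so $|L(w)|\geq \tfrac{1}{8}n^2-n^{3/2}=\tfrac{1}{8}n^2(1+o(1))$ for every $w\in V(G)$.

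There is no real obstacle here beyond making sure each ingredient is invoked cleanly: the averaging bound on $|L(v)|$, the geometric realizability of the swap via Lemma~\ref{pointsinplanepointswap}, and the asymptotic lower bound $|E(S(n))|\geq \tfrac{1}{24}n^3(1-o(1))$ so that the slack $n^{3/2}$ genuinely dominates the error. The only subtlety worth flagging is that the degree of $v$ toward $u$ (counted by $d(u,v)$) is at most $n$, which is negligible compared with the $n^{3/2}$ gap, so the swap strictly improves the count and produces the desired contradiction.
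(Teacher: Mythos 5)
Your proposal is correct and follows essentially the same route as the paper: the paper's proof also takes a vertex $u$ of small linkgraph, a vertex $v$ maximizing $|L(v)|$, invokes Lemma~\ref{pointsinplanepointswap} to realize $G_{u,v}$ geometrically, and then cites the same edge-count calculation as in Lemma~\ref{pointsplanemindegree} to contradict the maximality of $P$. Your only addition is to spell out that calculation and to note explicitly that $e(G)\geq e(S(n))$ because the iterated construction is realizable by a planar point set, which is exactly the justification the paper relies on implicitly.
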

\begin{proof}
We have that $G$ is $\mathcal{F}$-free. Assume that there exists $u\in V(G)$ with 
\begin{align*}
    |L(u)|<\frac{1}{8}n^2-n^{3/2}.
\end{align*} 
Let $v\in V(G)$ be a vertex maximizing $|L(v)|$. By Lemma~\ref{pointsinplanepointswap} there exists a point set $P'$ such that $G_{u,v}=G(P',T,\varepsilon(T))$. We have $|E(G_{u,v})|>|E(G)|$ by the same calculation as in the proof of Lemma~\ref{pointsplanemindegree}. This contradicts the maximality of $P$. 
\end{proof}
Now, we will strengthen the previous stability result.
\begin{lemma}
Let $T\in \mathcal{T}_\mathcal{F}$. There exists $n_0$ such that for every $n\geq n_0$ the following holds. Let $P$ be an $n$-element point set maximizing the number of triangles being $\varepsilon(T)$-similar to $T$. Then, the $3$-graph $G=G(P,T,\varepsilon(T))$ has a vertex partition $V(G)=X_1\cup X_2 \cup X_2$ such that 
  \begin{enumerate}[label=(\roman*)]
      \item there is no edge $e=xyz$ with $x,y\in X_i$ and $z\notin X_i$ for $i\in[3]$,
      \item $xyz\in E(G)$ for $x\in X_1, y\in X_2, z\in X_3$, 
      \item $|X_i|-|X_j|\leq 1$ for all $i,j\in[3]$.
  \end{enumerate}
\label{pointsinplanepartition3}
\end{lemma}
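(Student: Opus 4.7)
\emph{Proof plan.} I prove the lemma by strong induction on $n$, taking $n_0$ sufficiently large. For the setup, since $P$ maximizes the number of $\varepsilon(T)$-similar triangles, Lemma~\ref{pointsplanemindegree2} gives $|L(w)| \geq \frac{1}{8}n^2(1+o(1))$ for every $w \in V(G)$, and Lemma~\ref{pointsinplanepartition2} then produces a partition $V(G) = X_1 \cup X_2 \cup X_3$ with $|X_i| = \frac{n}{3}(1+o(1))$ satisfying~(i). The bulk of the remaining work is to establish~(ii) and~(iii).

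The key step is a decomposition argument using~(i) to split edges into intra-part and cross edges:
\[
e(G) \leq |X_1||X_2||X_3| + \sum_{i=1}^{3} e(G[X_i]).
\]
Since $G[X_i] = G(P \cap X_i, T, \varepsilon(T))$ and $|X_i| \geq n_0$ for $n$ large, the inductive hypothesis yields $e(G[X_i]) \leq h(|X_i|, T, \varepsilon(T)) = h(|X_i|)$. The iterated planar construction from the introduction, applied to $n$ points, realizes $h(n)$ similar triangles, so $e(G) \geq h(n)$. Combining these bounds with the recursive definition $h(n) = \max\{abc + h(a) + h(b) + h(c) : a+b+c=n\}$:
\[
h(n) \leq e(G) \leq |X_1||X_2||X_3| + \sum_{i=1}^{3} h(|X_i|) \leq h(n),
\]
forcing equality throughout.

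Equality in the cross-edge bound immediately gives~(ii): every cross triple belongs to $E(G)$. Equality in the recursion bound asserts that $(|X_1|,|X_2|,|X_3|)$ attains the maximum in the recursive definition of $h(n)$. To conclude~(iii), the plan is to show that for $n$ sufficiently large, the balanced partition is the unique maximizer (up to permutation). This follows from strict concavity of the continuous counterpart $g(a,b,c) = abc + (a^3+b^3+c^3)/24$ on $\{a+b+c = n\}$, whose unique critical point is $a=b=c=n/3$: any integer partition with $\max_i |X_i| - \min_i |X_i| \geq 2$ loses $\Omega(n)$ in $g$.

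The main obstacle will be making the uniqueness argument rigorous, since the $\Omega(n)$ loss from imbalance in $g$ and the lower-order correction $h(x) - x^3/24$ are of the same asymptotic order in $n$. A careful analysis of the recursion for $h(n) - n^3/24$, which equals $-n/24$ exactly when $n$ is a power of three, should suffice. Alternatively, a direct swap argument using Lemma~\ref{pointsinplanepointswap}, moving a vertex from an oversized part to become a copy of one in an undersized part and checking that the edge count strictly increases once~(ii) is in hand, gives an independent route to~(iii). The base of the induction, on a bounded initial range of $n$, is handled by combining Theorem~\ref{pointinplaneturanmainrekursion} with this swap-based analysis.
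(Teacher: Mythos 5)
Your setup (minimum linkgraph degree via Lemma~\ref{pointsplanemindegree2}, then Lemma~\ref{pointsinplanepartition2} to get a partition satisfying (i) with nearly equal parts) matches the paper. The problem is the sandwich you build on top of it. You write $e(G[X_i])\leq h(|X_i|,T,\varepsilon(T))=h(|X_i|)$ ``by the inductive hypothesis,'' but the identity $h(m,T,\varepsilon(T))=h(m)$ is not available: the lemma you are proving (and the induction on it) only yields structural information about maximizing configurations with at least $n_0$ points, so any attempt to unfold the recursion bottoms out at part sizes below $n_0$, where nothing bounds $h(m,T,\varepsilon(T))$ from above by $h(m)$. Indeed the inequality $h(m,T,\varepsilon)\leq h(m)$ for all $m$ is essentially the full exact result, which neither this paper nor B\'ar\'any--F\"uredi establishes for almost all triangles (the paper only gets the recursion for $n\geq n_0$ and the exact value for powers of $3$). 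Without it, your chain $h(n)\leq e(G)\leq |X_1||X_2||X_3|+\sum_i h(|X_i|)\leq h(n)$ collapses, and with it both your derivation of (ii) (``equality in the cross-edge bound'') and your primary route to (iii). The fix for (ii) is to keep $h(|X_i|,T,\varepsilon(T))$ itself on both sides: by (i), $e(G)\leq |X_1||X_2||X_3|+\sum_i e(G[X_i])$, while placing the three point sets $P\cap X_i$ (shrunk) near the vertices of a large triangle similar to $T$ gives a competitor with exactly $|X_1||X_2||X_3|+\sum_i e(G[X_i])$ edges; maximality of $P$ forces equality, hence every cross triple is an edge. This geometric rearrangement is the paper's argument for (ii), and you never invoke it.

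For (iii), your own concerns are justified: comparing totals cannot work, because the only control on $e(G[X_i])$ is the density $\tfrac14+o(1)$ from Theorem~\ref{pointinplaneturanmainrekursion}, whose $o(n^3)$ error swamps the $\Theta(n)$ gain from balancing, and the exact values $h(m,T,\varepsilon(T))$ are unknown (circularity). Your ``alternative'' swap argument is in fact the paper's actual proof of (iii): assuming $|X_1|\geq|X_2|+2$, take $v_1\in X_1$ of minimum linkgraph inside $X_1$ and $v_2\in X_2$ of maximum linkgraph inside $X_2$, use (ii) to count the cross edges exactly, average degrees via $e(G[X_i])/\binom{|X_i|}{3}=\tfrac14+o(1)$, and check that replacing $v_1$ by a clone of $v_2$ (realizable as a point set by Lemma~\ref{pointsinplanepointswap}) strictly increases the edge count, contradicting maximality. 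You gesture at this but do not carry out the computation, and note it genuinely needs (ii) first, which in turn needs the rearrangement argument above rather than your sandwich. So as written the proposal has a real gap; the salvageable parts are exactly the steps where it coincides with the paper's proof.
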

\begin{proof}
By Lemma~\ref{pointsplanemindegree2}, for every $w\in V(G)$, $|L(w)|\geq \frac{1}{8} n^2(1+o(1))$. Further, we have 
\begin{align*}
    e(G)\geq e(S(n))=\frac{1}{4}\binom{n}{3}(1+o(1)).
\end{align*}
Therefore, the assumptions from Lemma~\ref{pointsinplanepartition2} hold. Let $V(G)=X_1\cup X_2 \cup X_3$ be a partition having the properties from Lemma~\ref{pointsinplanepartition2}. 
Towards contradiction, assume that there exists $x\in X_1, y\in X_2, z\in X_3$ with $xyz\notin E(G)$. For $i\in [3]$, let $P_i$ be the point set corresponding to the set $X_i$. We have,
\begin{align*}
    e(G[X_i])=e(G(P_i,T,\varepsilon(T)).
\end{align*}
Construct a new point set $P'$ by taking a large enough triangle of shape $T$ and placing each of the point sets $P_i$ close to one of the three vertices of $T$. Using condition (i), this new point set $P'$ satisfies
\begin{align*}
    e(G(P',T,\varepsilon(T)))&=|X_1||X_2||X_3|+ \sum_{i=1}^3 e(G(P_i,T,\varepsilon(T)) \\
    &= |X_1||X_2||X_3|+ \sum_{i=1}^3 e(G[X_i]) > e(G),
\end{align*}
contradicting the maximality of $P$. Therefore, for all $x\in X_1, y\in X_2, z\in X_3$ we have $xyz\in E(G)$. By Theorem~\ref{pointinplaneturanmainrekursion}, we have
\begin{align*}
    \frac{e(G[X_1])}{\binom{|X_1|}{3}}=\frac{1}{4}+o(1) \quad \text{ and } \quad \frac{e(G[X_2])}{\binom{|X_2|}{3}}=\frac{1}{4}+o(1).
\end{align*}
Next, towards contradiction, assume that without loss of generality $|X_1|\geq |X_2|+2$. Let $v_1\in X_1$ be minimizing $|L_{X_1}(v_1)|$ and let $v_2\in X_2$ be maximizing $|L_{X_2}(v_2)|$. 
By the choice of $v_1$ and $v_2$,
\begin{align*}
     |L_{X_1}(v_1)|\leq\frac{3e(G[X_1])}{|X_1|} \quad \text{and} \quad |L_{X_2}(v_2)|\geq\frac{3e(G[X_2])}{|X_2|}.
\end{align*}
 The hypergraph $G_{v_1,v_2}$ is still $\mathcal{F}$-free by Lemma~\ref{pointsinplaneaddvertex} and has more edges than $G$: 
 \begingroup
\allowdisplaybreaks
\begin{align*}
    &|E(G_{v_1,v_2})|-|E(G)|=|X_1||X_3|+|L_{X_2}(v_2)|-|L_{X_1}(v_1)|-|X_2||X_3|-|X_3|\\
    \geq& \frac{3e(G[X_2])}{|X_2|}-\frac{3e(G[X_1])}{|X_1|}+|X_3|(|X_1|-|X_2|-1)\\
    =& \frac{3|e(G[X_2])|X_1|-3e(G[X_1])|X_2|}{|X_1||X_2|}+|X_3|(|X_1|-|X_2|-1)\\
    =& \left( \frac{1}{4}+o(1) \right)\frac{3\binom{|X_2|}{3}|X_1|-3\binom{|X_1|}{3}|X_2|}{|X_1||X_2|}+|X_3|(|X_1|-|X_2|-1)\\
    \geq& \left( \frac{1}{8}+o(1) \right)\frac{|X_2|^3|X_1|-|X_1|^3|X_2|}{|X_1||X_2|}+|X_3|(|X_1|-|X_2|-1)\\
    =& \left( \frac{1}{8}+o(1) \right)(|X_2|^2-|X_1|^2)+|X_3|(|X_1|-|X_2|-1)\\
     =&(|X_1|-|X_2|)\left( |X_3|-(|X_1|+|X_2|)\left(\frac{1}{8}+o(1)\right) \right)-|X_3|\\
          =&(|X_1|-|X_2|)\left( \frac{n}{4}+o(n) \right)-|X_3|
    \geq n \left(\frac{1}{2}+o(1)\right)-\left(\frac{1}{3}+o(1)\right)n> 0.
\end{align*}
\endgroup
\end{proof}

\subsection{Proof of  Theorem~\ref{pointinplanemainrekursion}}
Let $T\in\mathcal{T}_\mathcal{F}$ and $P$ be an $n$-element point set maximizing the number of triangles being $\varepsilon(T)$-similar to $T$. Denote $G=G(P,T,\varepsilon(T))$. By Lemma~\ref{pointsinplanepartition3}, the $3$-graph $G$ has a vertex partition $V(G)=X_1\cup X_2 \cup X_2$ such that $|X_i|-|X_j|\leq 1$ for all $i,j\in[3]$ and there is no edge $e=xyz$ with $xy\in X_i$ and $z\notin X_i$ for $i\in[3]$. Since the sets $X_1,X_2,X_3$ correspond to point sets of the same sizes, we have $e(G[X_i])\leq h(|X_i|,T,\varepsilon(T))$ for $i\in[3]$. Let $a=|X_1|,b=|X_2|$ and $c=|X_3|$. Now,
\begin{align*}
    h(n,T,\varepsilon(T))&=e(G)\leq a \cdot b \cdot c + e(G[X_1])+e(G[X_2])+e(G[X_3])\\
    &\leq a \cdot b \cdot c + h(a,T,\varepsilon(T))+h(b,T,\varepsilon(T))+h(c,T,\varepsilon(T)).
\end{align*}
It remains to show
\begin{align*}
    h(n,T,\varepsilon(T))\geq 
 a \cdot b \cdot c + h(a,T,\varepsilon(T))+h(b,T,\varepsilon(T))+h(c,T,\varepsilon(T)).
\end{align*}
There exists point sets $P_a,P_b,P_c\subseteq \mathbb{R}^2$ of sizes $a,b,c$ respectively, such that 
\begin{align*}
    e(G(P_a,T,\varepsilon(T)))= h(a,T,\varepsilon(T)), \quad \quad e(G(P_b,T,\varepsilon(T)))= h(b,T,\varepsilon(T)) \\ \text{and} \quad \quad e(G(P_c,T,\varepsilon(T)))= h(c,T,\varepsilon(T)).
\end{align*} 
Note that we can assume that $\text{diam}(P_a)=1$, $\text{diam}(P_b)=1$ and $\text{diam}(P_c)=1$, where $\text{diam}(Q)$ of a point set $Q$ is the largest distance between two points in the point set. By arranging the three point sets $P_a,P_b,P_c$ in shape of a large enough triangle $T$, we get a point set $P$ such that 
\begin{align*}
 h(n,T,\varepsilon(T))&\geq e(G(P,T,\varepsilon(T)))=a\cdot b \cdot c+ h(a,T,\varepsilon(T)) + h(b,T,\varepsilon(T)) + h(c,T,\varepsilon(T)),
\end{align*}
completing the proof of Theorem~\ref{pointinplanemainrekursion}.

\subsection{Proof of Corollary~\ref{pointsinplanecorol}}
Let $T$ be a triangle shape such that there exists $\varepsilon(T)$ that \eqref{pointsinplanerecformula} holds. 
By Theorem~\ref{pointinplanemainrekursion}, \eqref{pointsinplanerecformula} holds for almost all triangles. Take a point set $P$ on $3^\ell\geq n_0$ points maximizing the number of triangles being $\varepsilon(T)$-similar to $T$. Denote $H=G(P,T,\varepsilon(T))$. Note that because of scaling invariance we can assume that $\text{diam}(P)$ is arbitrary small. By applying \eqref{pointsinplanerecformula} iteratively, we have 
\begin{align}
\label{pointsinplaneextramli}
    h(3^{\ell+i},T,\varepsilon(T))= 3^i \cdot e(H)+3^{3\ell} \frac{1}{24}\left(3^{3i}-3^i\right)
\end{align} 
for all $i\geq 0$. \\
Now, towards contradiction, assume that there exists a point set $P'\subseteq \mathbb{R}^2$ of $3^k$ points such that the number of triangles similar to $\varepsilon(T)$ is more than $e(S(3^k))$. Let $G=G(P',T,\varepsilon(T)).$ Then, 
\begin{align*}
    e(G)>e(S(3^k))=\frac{1}{24}\left( 3^{3k}-3^k \right).
\end{align*}
Construct a point set $\bar{P}\subseteq \mathbb{R}^2$ of $3^{\ell+k}$ points by taking all points $p_G+p_H$, $p_G\in P',p_H\in P$ where addition is coordinate-wise. Let $\bar{G}:= G(\bar{P},T,\varepsilon(T))$. Since we can assume that $\text{diam}(P')$ is arbitrary small, $\bar{G}$ is the $3$-graph constructed from $G$ by replacing every vertex by a copy of $H$. Now,
\begin{align*}
    e(\bar{G})=   e(G) \cdot 3^{3\ell}+e(H) \cdot 3^k> 3^k \cdot e(H)+3^{3\ell} \frac{1}{24}\left(3^{3k}-3^k\right),
\end{align*}
contradicting \eqref{pointsinplaneextramli}. This completes the proof of Corollary~\ref{pointsinplanecorol}.

\section{Concluding remarks}
\label{pointsinplaneconcludingremarks}
When carefully reading the proof, one can observe that also the following Tur\'an type results hold. Recall that $\mathcal{F}$ is the set of forbidden $3$-graphs defined in Section~\ref{pointsinplaneforbidden}.
\begin{theo}
\label{pointsinplaneturanexact}
\begin{itemize}
The following statements holds.
\item[(a)] There exists $n_0$ such that for all $n\geq n_0$ 
\begin{align*}
    \textup{ex}(n,\mathcal{F})= a\cdot b \cdot c+  \textup{ex}(a,\mathcal{F})+  \textup{ex}(b,\mathcal{F})+  \textup{ex}(c,\mathcal{F}),
\end{align*}
where $n=a+b+c$ and $a,b,c$ are as equal as possible.
\item[(b)] Let $n$ be a power of $3$. Then,
\begin{align*}
    \textup{ex}(n,\mathcal{F})= \frac{1}{24}(n^3-n).
    \end{align*}
\end{itemize}
\end{theo}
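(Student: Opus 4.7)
The proof closely mirrors that of Theorem~\ref{pointinplanemainrekursion} and Corollary~\ref{pointsinplanecorol}, with $\textup{ex}(n,\mathcal{F})$ replacing $h(n,T,\varepsilon(T))$ and extremal $\mathcal{F}$-free hypergraphs replacing extremal point sets. The key observation bridging the two settings is that $S(n)$, and more generally the complete $3$-partite join of $S(a),S(b),S(c)$ for any composition $a+b+c=n$, is a sub-hypergraph of $G(P,T,\varepsilon(T))$ for a suitable iterated cluster point set $P$ (with cluster sizes $a,b,c$ at the top level) and any $T\in\mathcal{T}_\mathcal{F}$. Hence every such join is $\mathcal{F}$-free, giving the lower bound $\textup{ex}(n,\mathcal{F})\geq e(S(n))$ that drives the matching half of both (a) and (b).

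For the upper bound in (a) I would proceed by induction on $n$, with small base cases handled by the trivial bound $\textup{ex}(n,\mathcal{F})\leq\binom{n}{3}$ exactly as in the proof of Theorem~\ref{pointinplaneturanmainrekursion}. Let $G$ be an extremal $\mathcal{F}$-free $3$-graph on $n\geq n_0$ vertices. Then $e(G)=\tfrac{1}{24}n^3(1+o(1))$ by Theorem~\ref{pointinplaneturanmainrekursion} combined with the lower bound, and Lemma~\ref{pointsplanemindegree} supplies the minimum-degree hypothesis needed for Lemma~\ref{Fextrpartition}. That lemma produces a partition $V(G)=X_1\cup X_2\cup X_3$ forbidding crossing 2-1 edges, while Lemma~\ref{pointsinplanepartition2} sharpens $|X_i|=\tfrac{n}{3}(1+o(1))$. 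The induction hypothesis gives $e(G[X_i])\leq\textup{ex}(|X_i|,\mathcal{F})=e(S(|X_i|))$, so replacing each $G[X_i]$ by a fresh copy of $S(|X_i|)$ yields an $\mathcal{F}$-free hypergraph $G^{*}$ (the join is $\mathcal{F}$-free by the realization above) with $e(G^{*})\geq e(G)$. In particular $G^{*}$ is also extremal, and it now carries a clean complete $3$-partite structure between the parts with internal density $\tfrac{1}{4}(1+o(1))$ inside each $X_i$.

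At this point the swap computation from the proof of Lemma~\ref{pointsinplanepartition3}, applied to $G^{*}$ via Lemma~\ref{pointsinplaneaddvertex}, forces $\bigl||X_i|-|X_j|\bigr|\leq 1$: if, say, $|X_1|\geq|X_2|+2$, then picking $v_1\in X_1$ of below-average $X_1$-link and $v_2\in X_2$ of above-average $X_2$-link and replacing $v_1$ by a copy of $v_2$ produces an $\mathcal{F}$-free hypergraph with strictly more edges, contradicting the extremality of $G^{*}$. Writing $a,b,c$ for the three part sizes, now as equal as possible, the recursive definition of $S$ yields
\begin{align*}
\textup{ex}(n,\mathcal{F})=e(G)\leq abc+e(S(a))+e(S(b))+e(S(c))=e(S(n)),
\end{align*}
matching the lower bound. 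Part~(a) is then immediate from the recursion and the induction hypothesis applied to $\textup{ex}(a,\mathcal{F}),\textup{ex}(b,\mathcal{F}),\textup{ex}(c,\mathcal{F})$; part~(b) follows by iterating (a) on $n=3^k$ with base $\textup{ex}(3,\mathcal{F})=1$, which gives $\textup{ex}(3^k,\mathcal{F})=\tfrac{1}{24}(3^{3k}-3^k)$ by the same telescoping computation that yields $e(S(3^k))$. The main obstacle I anticipate is justifying the replacement $G[X_i]\to S(|X_i|)$ cleanly: it requires knowing that cluster-joins of $S$'s with \emph{unequal} sizes are $\mathcal{F}$-free, which is supplied by the explicit geometric realization but should be stated as a self-contained lemma; granting this, everything else is bookkeeping paralleling the proofs of Lemma~\ref{pointsinplanepartition3} and Theorem~\ref{pointinplanemainrekursion}.
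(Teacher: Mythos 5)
Your bridging observation is sound as far as it goes: for any $T\in\mathcal{T}_\mathcal{F}$ the iterated cluster construction realizes $S(n)$, and more generally the complete $3$-partite join of $S(a),S(b),S(c)$, inside some $G(P,T,\varepsilon(T))$, so these particular graphs are $\mathcal{F}$-free and $\textup{ex}(n,\mathcal{F})\ge e(S(n))$. But the engine of your argument runs on an induction hypothesis that you cannot seed. The step ``$e(G[X_i])\le \textup{ex}(|X_i|,\mathcal{F})=e(S(|X_i|))$'' assumes the exact equality $\textup{ex}(m,\mathcal{F})=e(S(m))$ at scale $m\approx n/3$; your base cases are handled only by $\textup{ex}(m,\mathcal{F})\le\binom{m}{3}$, which suffices for the additive-error induction in Theorem~\ref{pointinplaneturanmainrekursion} but cannot start an induction proving an exact equality, and for $m<n_0$ that equality is neither claimed by the theorem nor known (the recursion in (a) deliberately bottoms out at unknown small values). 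Without it, $e(G^*)\ge e(G)$ fails, $G^*$ need not be extremal, and your swap argument for $\bigl||X_i|-|X_j|\bigr|\le 1$ collapses; likewise your closing chain ``$\le abc+e(S(a))+e(S(b))+e(S(c))=e(S(n))$, matching the lower bound'' proves the wrong statement. Equally serious, the lower bound of (a) as stated, $\textup{ex}(n,\mathcal{F})\ge abc+\textup{ex}(a,\mathcal{F})+\textup{ex}(b,\mathcal{F})+\textup{ex}(c,\mathcal{F})$, requires joining three \emph{extremal} $\mathcal{F}$-free graphs, which are abstract and need not be geometrically realizable; your realization argument covers only joins of the $S(m)$'s. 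The lemma you flag as ``the main obstacle'' is therefore needed in the general form ``the complete $3$-partite join of $\mathcal{F}$-free $3$-graphs is $\mathcal{F}$-free,'' and it is not routine: being $\mathcal{F}$-free only guarantees that each part is realizable (for all $\varepsilon$) on \emph{some} positive-measure set of triangle shapes, and the three witness sets need not intersect, so one must first prove a null/co-null dichotomy for realizability of small $3$-graphs (or an equivalent closure property of $\mathcal{F}$). In the paper's point-set setting this whole issue is bypassed by geometric maximality (rearranging actual clusters gives completeness between the parts in Lemma~\ref{pointsinplanepartition3}), which has no abstract counterpart. (For what it is worth, the upper-bound direction of (a) can be salvaged without your exact induction hypothesis: extremality, $e(G)\ge e(S(n))\ge \frac{1}{24}n^3-O(n\log n)$, and the quantitative bound $\textup{ex}(m,\mathcal{F})\le \frac{1}{24}m^3+m\,n_0^2$ from the proof of Theorem~\ref{pointinplaneturanmainrekursion} already force near-completeness of the crossing edges and then the swap of Lemma~\ref{pointsinplaneaddvertex} gives balancedness; it is the $\ge$ direction of (a) and part (b) that genuinely need the join/composition lemma.)

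Part (b) has the same defect in sharper form: you propose to ``iterate (a) down to the base $\textup{ex}(3,\mathcal{F})=1$,'' but (a) is only available for $n\ge n_0$, so the iteration stops at the largest power of $3$ below $n_0$, where no exact value is known; indeed (b) asserts an exact value for \emph{every} power of $3$, including small ones. This is precisely why Corollary~\ref{pointsinplanecorol} is not proved by iteration but by the product-of-point-sets trick, and that trick does not transfer verbatim either: a putative counterexample on $3^k$ vertices is now an abstract $\mathcal{F}$-free $3$-graph rather than a point configuration, so the $\mathcal{F}$-freeness of the blow-up used in the contradiction again requires an abstract composition lemma of the same kind as above. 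So both halves of the theorem hinge on a closure property of $\mathcal{F}$ under joins/blow-ups that your proposal does not establish; isolating and proving that lemma (or restricting the statement) is the missing step.
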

It would be interesting to prove the Tur\'an type results, Theorem~\ref{pointsinplanemainasymp} and Theorem~\ref{pointsinplaneturanexact}, for a smaller family of hypergraphs than $\mathcal{F}$. Potentially the following conjecture by Falgas-Ravry and Vaughan could be tackled in a similar way. 
\begin{conj}[Falgas-Ravry and Vaughan~\cite{RavryTuran}]\label{conj:frv}
\begin{align*}
    \textup{ex}(n,\{K_4^-,C_5\})=\frac{1}{4}\binom{n}{3}(1+o(1)).
\end{align*}
\end{conj}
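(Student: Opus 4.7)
The lower bound is immediate: the iterated construction $S(n)$ is $\{K_4^-,C_5\}$-free, since every edge of $S(n)$ has its three vertices in three distinct top-level parts, which prevents $K_4^-$, and a short case analysis on which parts the vertices of a tight $C_5$ lie in forces two consecutive cycle-edges to share two vertices living in the same top-level part, contradicting the tripartite structure on each level. Since $e(S(n))=\frac{1}{24}n^3(1+o(1))=\frac{1}{4}\binom{n}{3}(1+o(1))$, the lower bound follows. For the matching upper bound the plan is to imitate the three-stage framework of this paper: a weak stability result via flag algebras, a cleaning step, and then a recursion.

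For the flag-algebra stability step I would run the semidefinite program for $\{K_4^-,C_5\}$ on $8$ or $9$ vertices, aiming for an analogue of Lemma~\ref{flaginductive}: every near-extremal $\{K_4^-,C_5\}$-free $3$-graph $G$ contains an edge $x_1x_2x_3$ whose three pairwise neighborhoods $N(x_i,x_j)$ are pairwise disjoint (automatic from $K_4^-$-freeness), each of size at least $(\tfrac{1}{3}-o(1))n$, and jointly covering all but $o(n)$ of $V(G)$. Rounding the semidefinite bound down to exactly $\tfrac{1}{4}\binom{n}{3}$ is the first hurdle: current values for $\{K_4^-,C_5\}$ sit around $0.2502\binom{n}{3}$, so closing the last $10^{-4}$ gap would likely require additional structural input (for instance, exploiting zero-eigenvector configurations or differential arguments in the spirit of stability-based flag algebra rounding).

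The cleaning step is where the substantial difficulty lies. Lemmas~\ref{pointsinplanepartition} and~\ref{Fextrpartition} leaned heavily on the forbidden graphs $L_2,L_7,L_8,L_9,L_{10}$ to outlaw edges living improperly between or inside the putative parts $A_1,A_2,A_3$. With only $K_4^-$ and $C_5$ available, each such structural claim must be re-derived. For example, to rule out an edge $abc$ with $a,b\in A_1'$ and $c\in A_2'$ (the content of Claim~\ref{pointsinplaneclaim32}), one would try to locate a tight $C_5$ among $\{x_1,x_2,x_3,a,b,c\}$ using the known edges $x_1x_2x_3, x_1x_3b, x_1x_3a, x_2x_3c$; when six vertices do not suffice, one can use the large pairwise neighborhoods guaranteed by the stability step to introduce auxiliary vertices and build a $C_5$ out of longer chains of overlapping edges. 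The main obstacle of the whole project is performing this replacement systematically for each of the claims in Lemmas~\ref{pointsinplanepartition} and~\ref{Fextrpartition}, since some of the $L_i$-based forbiddances may require substantially more intricate substitutes, or may even require auxiliary quantitative tools in place of a single subgraph embedding.

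Once a tripartite partition $V(G)=X_1\cup X_2\cup X_3$ satisfying the analogues of (a)--(e) is in hand, the argument of Theorem~\ref{pointinplaneturanmainrekursion} goes through verbatim: induction yields $\textup{ex}(n,\{K_4^-,C_5\})\leq \frac{1}{24}n^3+O(n\cdot n_0^2)$, and the same optimization over $|X_1|+|X_2|+|X_3|=n$ with each $|X_i|\in[0.26n,0.48n]$ forces equality up to lower-order terms. Thus the real work is concentrated in the flag-algebra round-down and in the $C_5$-based replacements in cleaning; the recursive step and the base case inherit directly from this paper.
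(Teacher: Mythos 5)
The statement you are addressing is not a theorem of this paper at all: it is an open conjecture of Falgas-Ravry and Vaughan, which the paper merely records (and explicitly flags as something that ``could potentially be tackled in a similar way''). So there is no proof in the paper to compare against, and your proposal does not supply one either — it is a research plan whose two central steps you yourself leave unresolved. First, the flag-algebra bound for $\{K_4^-,C_5\}$ sits strictly above $\tfrac14\binom{n}{3}$ (around $0.2502\binom{n}{3}$, comparable to the paper's bound \eqref{ourbound} for the much larger family $\mathcal{F}$), and ``rounding it down'' is not a routine matter of zero-eigenvector bookkeeping; indeed, the whole point of the paper's strategy is that it \emph{avoids} having to prove a sharp pure flag-algebra bound, replacing it by the weak structural Lemma~\ref{flaginductive} plus combinatorial cleaning. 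An analogue of Lemma~\ref{flaginductive} for $\{K_4^-,C_5\}$ would itself need a new SDP certificate of the type in Claim~\ref{pointsinplaneflagclaim}, which you have not exhibited and which may or may not exist at the required numerical strength.

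Second, and more seriously, the cleaning stage of the paper is not a generic argument that can be ``re-derived'' with $C_5$ in place of the $L_i$: Claims~\ref{pointsinplaneclaim32} and~\ref{pointsinplaneclaimd} and the case analysis in Lemma~\ref{Fextrpartition} embed the specific configurations $L_2$, $L_7$, $L_8$, $L_9$, $L_{10}$, some on up to $12$ vertices, and these are not subconfigurations of a tight $5$-cycle nor obviously replaceable by one. For instance, in your sketch of the analogue of Claim~\ref{pointsinplaneclaim32}, the edges $x_1x_2x_3,\,abc,\,x_jx_ka,\,x_jx_kb,\,cx_ix_k$ form a copy of $L_2$, not of $C_5$, and it is precisely because the larger family $\mathcal{F}$ contains $L_2$ that the contradiction is immediate; with only $K_4^-$ and $C_5$ forbidden, no contradiction follows from these six vertices, and your suggestion to ``introduce auxiliary vertices using the large neighborhoods'' is exactly the missing idea, not an execution of it. Your lower-bound paragraph is fine (the iterated construction $S(n)$ is indeed $\{K_4^-,C_5\}$-free and gives $\tfrac14\binom{n}{3}(1+o(1))$), but that part was never in doubt. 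As it stands, the proposal correctly identifies where the difficulty lies but does not close either gap, so the conjecture remains open.
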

Considering that for our proof it was particularly important that $K_4^-$ and $L_2 = \{123,124,125,136,456\}$ are forbidden, we conjecture that $S(n)$ has asymptotically the most edges among $\{K_4^-,L_2\}$-free $3$-graphs.
\begin{conj}
\begin{align*}
    \textup{ex}(n,\{K_4^-,L_2\})=\frac{1}{4}\binom{n}{3}(1+o(1)).
\end{align*}
\end{conj}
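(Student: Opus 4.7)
The lower bound $\textup{ex}(n,\{K_4^-,L_2\})\geq \frac{1}{4}\binom{n}{3}(1+o(1))$ is immediate by exhibiting $S(n)$: its recursive tripartite structure means no edge re-uses a pair of vertices, so it is $K_4^-$-free, and since every two edges sharing two vertices are already forbidden, in particular the three edges $\{123,124,125\}$ sitting on a common pair cannot occur, so $S(n)$ is $L_2$-free. Hence $|E(S(n))| = \frac{1}{24}n^3(1+o(1))$ gives the bound. The content of the conjecture lies in the matching upper bound.

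For the upper bound I would mimic the three-stage architecture the paper developed for $\mathcal{F}$, but with the much smaller forbidden family $\{K_4^-,L_2\}$. First, run the flag algebra semidefinite program on $\{K_4^-,L_2\}$ at flag size $N\in\{8,9\}$ to obtain a bound of the shape $\textup{ex}(n,\{K_4^-,L_2\})\leq (1/4+\eta)\binom{n}{3}$ and, more importantly, an analog of Lemma~\ref{flaginductive}: a starting triangle $x_1x_2x_3$ whose pairwise neighborhoods $A_i=N(x_j,x_k)$ are disjoint (automatic from $K_4^-$-freeness), each of size $\approx n/3$, covering nearly all of $V(G)$. Second, upgrade this to a clean partition $V(G)=X_1\cup X_2\cup X_3$ with no edge of ``cross-part'' type $abc$, $a,b\in X_i$, $c\in X_j$ --- this is the analog of Lemma~\ref{Fextrpartition}. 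Third, conclude by the same convexity/induction argument as in the proof of Theorem~\ref{pointinplaneturanmainrekursion}: $e(G)\leq |X_1||X_2||X_3|+\sum e(G[X_i])\leq \frac{1}{24}n^3(1+o(1))$.

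The main obstacle is the second stage. In the paper, property (c) of Lemma~\ref{pointsinplanepartition} (no edge $abc$ with $a,b\in A_i'$, $c\in A_j'$) already follows from $L_2$-freeness alone, so it survives. However, property (d) (no vertex $v$ sending a triangle into two different parts) uses $L_7$, and the six cases in the proof of Lemma~\ref{Fextrpartition} use $L_8,L_9,L_{10}$. These forbidden subgraphs were chosen exactly because they arise as soon as one has the triangle $x_1x_2x_3$, a cross-part edge, and one or two ``link witnesses'' from property (e). The key step for the conjecture would be to show directly that such a cross-part configuration, together with \emph{iterated} link witnesses in $\{K_4^-,L_2\}$-free graphs, forces either a $K_4^-$ or an $L_2$ several rounds away. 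A plausible route is to inductively extend: take the hypothetical cross-part edge $abc$, pick neighbors of $a,b,c$ dictated by property (e), and argue that at the boundary where these new vertices meet, either a $K_4^-$ appears (two edges share a pair) or three edges through a vertex create an $L_2$-configuration. A cleanly exhibiting such unavoidable $L_2$ in every case is the crux.

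If this structural propagation step is too weak to rule out all cross-part edges, I would fall back to an approximate partition and deploy a stability/symmetrization argument: after removing $o(n^3)$ edges one obtains a partition, then shifting vertices à la Lemma~\ref{pointsinplaneaddvertex} together with minimum codegree arguments drives the error to zero. In either approach, the fundamental difficulty is that the flag-algebra bound for $\{K_4^-,L_2\}$ is currently not known to meet $1/4$, so Stage~1 may only yield a weaker initial structure than Lemma~\ref{flaginductive} provides. A successful proof will likely need either a sharper flag-algebra computation at larger $N$ (to pin the density down to within $o(1)$ of $1/4$ before stability), or a genuinely new combinatorial lemma showing that $\{K_4^-,L_2\}$-freeness alone forces recursive tripartiteness once the edge count is near-extremal. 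For this reason I expect a full proof to be of comparable difficulty to, and perhaps to imply, the Falgas-Ravry--Vaughan conjecture.
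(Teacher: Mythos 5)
This statement is a \emph{conjecture} in the paper: the authors give no proof of it, only the lower-bound construction $S(n)$ and the remark that a standard $7$-vertex flag algebra computation yields $\textup{ex}(n,\{K_4^-,L_2\})\leq 0.25074\binom{n}{3}$, which falls short of $\frac14\binom{n}{3}(1+o(1))$. Your submission is, by your own account, a research plan rather than a proof, and the gap is exactly the content of the conjecture: you do not supply a flag-algebra certificate for $\{K_4^-,L_2\}$ strong enough to play the role of Lemma~\ref{flaginductive}, and you do not supply the combinatorial replacement for the cleaning steps of Lemmas~\ref{pointsinplanepartition} and~\ref{Fextrpartition}, which in the paper crucially use $L_7,L_8,L_9,L_{10}$ --- hypergraphs that are \emph{not} forced to contain $K_4^-$ or $L_2$, so forbidding only $\{K_4^-,L_2\}$ gives you no access to those contradictions. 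The ``structural propagation'' step you describe (cross-part edge plus iterated link witnesses forces $K_4^-$ or $L_2$) is precisely the missing idea, and nothing in your text establishes it; your fallback via approximate partitions and symmetrization is likewise only sketched. So the upper bound remains entirely open in your write-up, as it does in the paper.

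A smaller but genuine error: your justification of the lower bound is wrong as stated. In $S(n)$ it is false that ``no edge re-uses a pair of vertices'' --- a pair with its two vertices in different top-level parts lies in roughly $n/3$ edges --- and the three edges $123,124,125$ on a common pair certainly can occur in $S(n)$ (take $1,2$ in distinct parts and $3,4,5$ in the third). The correct arguments are: $K_4^-$-freeness holds because any four vertices meeting at least two parts of the coarsest separating level span at most two edges; and $L_2$-freeness needs the whole configuration --- if $1,2$ lie in distinct parts then $3,4,5$ are forced into the third part and the edge $136$ forces $6$ into the part of $2$, making $456$ a non-edge, while if $1,2$ share a part all six vertices are pushed into that part and one recurses. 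The conclusion that $S(n)$ is $\{K_4^-,L_2\}$-free (hence the lower bound) is true, but not for the reasons you give.
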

Note that a standard application of flag algebras on 7 vertices shows
\begin{align*}
    \textup{ex}(n,\{K_4^-,L_2\})\leq 0.25074\binom{n}{3}
\end{align*}
for $n$ sufficiently large.  

Theorem~\ref{pointsinplanemainasymp} determines $h(n,T,\varepsilon)$ asymptotically for almost all triangles $T$ and $\varepsilon>0$ sufficiently small. It remains open to determine $h(n,T,\varepsilon)$ for some triangles $T\in S$. B\'ar\'any and F\"uredi~\cite{MR3953886} provided asymptotically better bounds stemming from recursive constructions for some of those triangles. Potentially a similar proof technique to ours could be used to determine $h(n,T,\varepsilon)$ for some of those triangle shapes.

Another interesting question is to change the space, and study point sets in $\mathbb{R}^3$ or even $\mathbb{R}^d$ instead of the plane. Given a triangle $T\in S$, $\varepsilon>0$, $d\geq2$ and $n\in \mathbb{N}$, denote 
$g_d(n,T,\varepsilon)$ the maximum number of triangles in a set of $n$ points from $\mathbb{R}^d$ that are $\varepsilon$-similar to a triangle $T$. Being allowed to use one more dimension might help us to find constructions with more triangles being $\varepsilon$-similar to $T$. 

For an acute triangle $T$ and $d=3$, we can group the $n$ points into four roughly equal sized groups and place each group very close to a vertex of a tetrahedron with each face being similar to $T$.
For a crafty reader, we are including a cutout that leads to a tetrahedron with all sides being the same triangle in Figure~\ref{fig:cutout} on the left.
Each group can again be split up in the same way. Keep doing this iteratively gives us 
\begin{align*}
    g_3(n,T,\varepsilon)\geq\frac{1}{15}n^3(1+o(1))
\end{align*}
for some $\varepsilon>0$.
Note that for almost all acute triangles $T$, 
\[
g_2(n,T,\varepsilon) = h(n,T,\varepsilon)=\frac{1}{24}n^3(1+o(1)) < g_3(n,T,\varepsilon).
\]

\begin{figure}
\begin{center}
\begin{tikzpicture}
    \draw
    (0,0) coordinate(a) -- ++(0:4) coordinate(b)
    (a) -- ++(70:3) coordinate(c) -- (b)
    (0.4,-0.1) node{\ding{35}}
    ;
    \draw[dashed]
    ($(a)!0.5!(b)$) -- ($(a)!0.5!(c)$) -- ($(c)!0.5!(b)$) -- cycle
    ;
\end{tikzpicture}
\hskip 2em
\begin{tikzpicture}
    \draw
    (0,0) coordinate(a) -- ++(0:3) coordinate(b)
    (a) -- ++(30:6) coordinate(c) -- (b)
    (0.4,-0.1) node{\ding{35}}
    ;
    \draw[dashed]
    ($(a)!0.5!(b)$) -- ($(a)!0.5!(c)$) -- ($(c)!0.5!(b)$) -- cycle
    ;
\end{tikzpicture}
\end{center}
    \caption{A cutout of a tetrahedron using an acute triangle on the left. A cutout not giving a tetrahedron coming from an obtuse triangle on the right. Bend along the dashed lines.}
    \label{fig:cutout}
\end{figure}
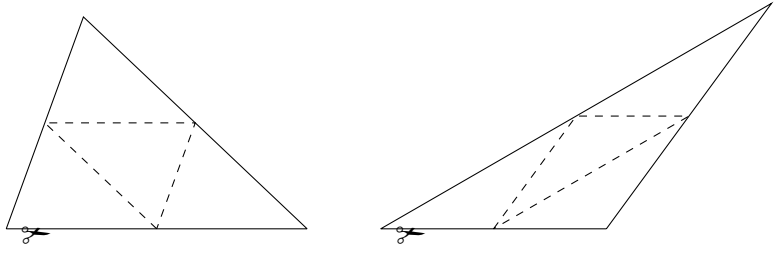

For $T$ being an equilateral triangle and $d\geq4$ we can find a better construction. There is a $d$-simplex with all faces forming equilateral triangles. Grouping the $n$ points into $d+1$ roughly equal sized groups and placing each group very close to the vertex of the $d$-simplex and then iterating this, gives us  
\begin{align*}
    g_d(n,T,\varepsilon)\geq \sum_{i\geq 1}\left(\frac{n}{(d+1)^i}\right)^3 \binom{d+1}{3}(d+1)^{i-1} \ (1+o(1))=\frac{1}{6}\frac{d-1}{d+2}n^3(1+o(1)).
\end{align*}

The following variation of the problem could also be interesting.
We say that two triangles are \emph{$\varepsilon$-isomorphic} if their side lengths are  $a \leq b \leq c$ and 
$a' \leq b' \leq c'$ and $|a-a'|,|b-b'|,|c-c'| < \varepsilon$.
Maximizing the number of $\varepsilon$-isomorphic triangles has the following upper bound. 
Denote the side lengths of a triangle $T$ by $a$, $b$, and $c$. Now color edges of $K_n$ with colors $a$, $b$, and $c$ such that the number of rainbow triangles is maximized. Note that rainbow triangles would correspond to triangles isomorphic  to $T$, if there exists an embedding of $K_n$ in some $R^d$ such that the distances correspond to the colors.
The problem of maximizing the number of rainbow triangles in a $3$-edge-colored $K_n$ is a problem of Erd\H{o}s and S\'{o}s (see~\cite{MR0337636}) that was solved by flag algebras~\cite{MR3667664}. The asymptotic construction is an iterated blow-up of a properly $3$-edge-colored $K_4$. 
Properly $3$-edge-colored $K_4$ can be embedded as a tetrahedron in $\mathbb{R}^3$. 
This gives $\frac{1}{16}n^3(1+o(1))$ $\varepsilon$-isomorphic triangles in $\mathbb{R}^3$. 
This heuristics suggests that increasing the dimension beyond $3$ may allow us to embed slightly more $\varepsilon$-isomorphic triangles by making it possible to embed more of the iterated blow-up of $K_4$ construction.  
The number of rainbow triangles 
the iterated blow-up of a properly $3$-edge-colored $K_4$ is $\frac{1}{15}n^3(1+o(1))$ which is an upper bound on the number of $\varepsilon$-isomorphic triangles for any $d$.

In our construction maximizing the number of $\varepsilon$-similar triangles for $d=3$, the majority of triangles are actually $\varepsilon$-isomorphic. Already for $d=3$, we can embed $\frac{1}{15}n^3(1+o(1))$  $\varepsilon$-similar triangles, which is the upper bound on the number of $\varepsilon$-isomorphic triangles for any $d$. This suggests that increasing the dimension beyond $d=3$ may result in only very small increases on the number $\varepsilon$-isomorphic triangles or a very different construction is needed.

The above heuristic does not apply to isosceles triangles. Maximizing the number of $\varepsilon$-isomorphic triangles would correspond to a $2$-edge-coloring of $K_n$ and maximizing the number of induced path on $3$ vertices in one of the two colors.
The extremal construction is a balanced complete bipartite graph in one color. Increasing the dimension helps with embedding a bigger $2$-edge-coloring of $K_n$ and in turn obtaining larger number of $\varepsilon$-isomorphic triangles
with $\frac{1}{8}n^3(1+o(1))$ being the upper bound. 

In general, the number obtuse triangles do not seem to benefit as much from higher dimensions.  Embedding three $\varepsilon$-similar obtuse triangles on $4$ points is not possible for any $d$
for almost all obtuse triangles.
This contrasts with acute triangles, where $4$ points can give four $\varepsilon$-isomorphic triangles for dimension at least $3$. 
The reader may try it 
for $\varepsilon$-isomorphic triangles with cutouts in Figure~\ref{fig:cutout}.
We have not explored the above problems for obtuse triangles further.


\bibliographystyle{abbrv}
\bibliography{pointsinplane}

\end{document}